\newtheorem{theorem}{Theorem}
\newtheorem{corollary}[theorem]{Corollary}
\newtheorem{remark}[theorem]{Remark}
\newenvironment{proof}[1][Proof]{\noindent\textbf{#1.} }{\ \rule{0.5em}{0.5em}}
\newdimen\dummy
\begin{document}

\begin{center}
\bigskip \textbf{Generalizations of four hyperbolic-type metrics and Gromov
hyperbolicity}

Marcelina Mocanu

\bigskip

Department of Mathematics and Informatics, \textquotedblleft Vasile
Alecsandri\textquotedblright\ University of Bac\u{a}u, Calea M\u{a}r\u{a}%
\c{s}e\c{s}ti 157, 600115 Bac\u{a}u, Romania; mmocanu@ub.ro

\bigskip
\end{center}

\textbf{Abstract}

We study in the setting of a metric space $\left( X,d\right) $ some
generalizations of four hyperbolic-type metrics defined on open sets $G$
with nonempty boundary in the $n-$dimensional Euclidean space, namely
Gehring-Osgood metric, Dovgoshey- Hariri-Vuorinen metric, Nikolov-Andreev
metric and Ibragimov metric. In the definitions of these generalizations,
the boundary $\partial G$ of $G$ and the distance from a point $x$ of $G$ to 
$\partial G$ are replaced by a nonempty proper closed subset $M$ of $X$ and
by a $1-$Lipschitz function positive on $X\setminus M$, respectively. For
each generalization $\rho $ of the hyperbolic-type metrics mentioned above
we prove that $\left( X\setminus M,\rho \right) $ is a Gromov hyperbolic
space and that the identity map between $\left( X\setminus M,d\right) $ and $%
\left( X\setminus M,\rho \right) $ is quasiconformal. For the Gehring-Osgood
metric and the Nikolov-Andreev metric we improve the Gromov constants known
from the literature. For Ibragimov metric the Gromov hyperbolicity is
obtained even if we replace the distance from a point $x$ to $\partial G$ by
any positive function on $X\setminus M$.

\textbf{Mathematics Subject Classification:} 30C65, 30L10;

\textbf{Keywords:} hyperbolic-type metric, Gromov hyperbolic metric space,
quasiconformal map.

\section{Introduction}

More than three decades after it was introduced, the concept of Gromov
hyperbolicity is very relevant in metric geometry, due to its applications
to various fields of mathematics and theoretical computer science, bridging
discrete and continuous settings. In the seminal paper \cite{Gromov} Gromov
generalized the concept of negative curvature, as seen in the hyperbolic
plane, to a much broader class of metric spaces, in order to address
problems from geometric group theory and metric geometry. Gromov hyperbolic
spaces are used to study groups via their actions on spaces with hyperbolic
properties and have applications in topology, dynamical systems, and
computer science (e.g., in modeling network geometry).

In geodesic spaces, Gromov hyperbolic spaces are defined via the $\delta $%
-thin triangles condition, which characterizes how triangles in the space
resemble those in classical hyperbolic geometry. In general metric spaces,
Gromov hyperbolic spaces are defined using the Gromov product, as follows.
The Gromov product of $x,y\in X$ at $w\in X$ (with respect to a base point $%
w $) is $\left( \left. x\right\vert y\right) _{w}=\frac{1}{2}\left(
d(x,w)+d(y,w)-d\left( x,y\right) \right) $. The metric space $\left(
X,d\right) $ is said to be Gromov hyperbolic (with base point $w\in X$) if
there exists a constant $\delta \geq 0$ such that 
\begin{equation}
\left( \left. x\right\vert z\right) _{w}+\delta \geq \min \left\{ \left(
\left. x\right\vert y\right) _{w},\left( \left. y\right\vert z\right)
_{w}\right\}  \label{Gromovdef1}
\end{equation}%
for every $x,y,z\in X$ \cite{Hasto2006}. If this inequality holds for some $%
w\in G$ with a constant $\delta $, then it holds for every $w\in G$ with $%
\delta $ replaced by $2\delta $. The following symmetric form of (\ref%
{Gromovdef1}), with $\delta $ called a Gromov constant, will be used in the
sequel \cite{Hasto2006}: 
\begin{equation*}
d\left( x,z\right) +d\left( y,w\right) \geq \max \left\{ d\left( x,w\right)
+d\left( y,z\right) ,d\left( x,y\right) +d\left( z,w\right) \right\}
+2\delta .
\end{equation*}%
The article \cite{Vaisala} of V\"{a}is\"{a}l\"{a} is a mini monograph on
Gromov hyperbolic metric spaces, which need not be geodesic or proper. Lind%
\'{e}n \cite{Linden} gave in 2007 a status report on the research of various
hyperbolic-type metrics in proper subdomains $D$ of $\mathbb{R}^{n}$. In 
\cite{Linden} several examples of Gromov hyperbolic metrics $\rho _{D}$ on $%
D $ are collected and investigated, let us mention the classical hyperbolic
metric $h_{D}$ (where $D$ is a simply connected domain for $n=2$, while $%
D\in \left\{ \mathbb{B}^{n},\mathbb{H}^{n}\right\} $ for $n\geq 3$)- see 
\cite{BM}, the quasihyperbolic metric $k_{D}$ for an $A-$uniform domain $D$
(by \cite[Theorem 3.6]{Bonk}, $\delta \leq 10^{4}A^{8}$), the Apollonian
metric $\alpha _{D}$ (by \cite{Hasto2006}, using a rough isometry), the
Seitteranta metric $\delta _{D}$, the Ferrand metric $\sigma _{D}$ (if $D$
is a uniform domain, by \cite[Theorem 3.6]{Bonk}), Gehring-Osgood metric $%
\widetilde{j}_{D}$ (with $\delta \leq \log 3$, by \cite{Hasto2006}). The
half-Appolonian metric $\eta _{D}$ introduced by H\"{a}st\"{o} and Lind\'{e}%
n \cite{HaLi} and the distance-ratio metric $j_{D}$ introduced by Vuorinen 
\cite{Vuorinen} as a substitute for the Gehring-Osgood metric are very
useful in quasiconformal theory, but are Gromov hyperbolic (both with $%
\delta \leq \log 9$) only when $D$ has a single boundary point.

Many other hyperbolic-type metrics than that mentioned above have been
introduced and studied, such as the triangular ratio metric \cite{Hasto1},
the Cassinian metric \cite{IbraCas}, the visual angle metric \cite{Klen},
the Dovgoshey-Hariri-Vuorinen metric \cite{DHV2016}, the Nikolov-Andreev
metric \cite{Nikolov}, the Ibragimov metric \cite{Ibra}. For a systematic
account on these metrics and their applications to geometric function
theory, see the monograph by Hariri, Kl\'{e}n and Vuorinen \cite{HaririBook}%
. See also \cite{Deza}, where hyperbolic-type metrics are discussed among
may other metrics.

Very recently, the Gromov hyperbolicity in any proper subdomain of $\mathbb{R%
}^{n}$ was proved by Zhou, Zheng, Ponnusamy and Guan \cite{Zhou} for the
Dovgoshey-Hariri-Vuorinen metric $h_{c}$ (with $\delta =\log \frac{2c+1}{c}$%
), and by Luo, Rasila, Wang and Zhuo \cite{Luo} for the Nikolov-Andreev
metric (with $\delta =\log 15$).

In this paper we study generalizations of several hyperbolic-type metrics
defined on open sets $G$ with nonempty boundary in the $n-$dimensional
Euclidean space $\mathbb{R}^{n}$, namely Gehring-Osgood metric, Dovgoshey-
Hariri-Vuorinen metric, Nikolov-Andreev metric and Ibragimov metric. We
follow three directions of generalization. Firstly, the $n-$dimensional
Euclidean space is replaced by an arbitrary metric space $\left( X,d\right) $%
, which is natural, as Gromov hyperbolicity is defined in the general
setting of metric spaces \cite{Gromov}, \cite{Burago}, see also the papers
of V\"{a}is\"{a}l\"{a} \cite{Vaisala}, Ibragimov \cite{Ibra}, Zhang and
Xiao\ \cite{Zhang}. Secondly, the boundary of $G$ is replaced by a nonempty
proper closed subset $M$ of $X$, as in \cite{Ibra}. Thirdly, the distance
from a point $x$ to the boundary of $G$ is replaced by a positive $1-$%
Lipschitz function on $\left( X\setminus M,d\right) $, as in \cite[%
Proposition 11]{Nikolov}. For each generalization $\rho $ of the mentioned
hyperbolic-type metrics we prove that $\left( X\setminus M,\rho \right) $ is
a Gromov hyperbolic space and that the identity map between $\left(
X\setminus M,d\right) $ and $\left( X\setminus M,\rho \right) $ is
quasiconformal. or the Gehring-Osgood metric and the Nikolov-Andreev metric
we improve the Gromov constants known from the literature. For Ibragimov
metric the Gromov hyperbolicity is obtained even if we replace the distance
from a point $x$ to $\partial G$ by any positive function on $X\setminus M$.

Let $\left( X,d\right) $ be a metric space. The distance from a point $x\in
X $ to a non-empty subset $A$ of a metric space $\left( X,d\right) $ is $%
d_{A}\left( x\right) =\mathrm{dist}\left( x,A\right) =\inf \left\{
d(x,y):y\in A\right\} $ and is always a non-negative number. Moreover, $%
d_{A}(x)=0$ if and only if $x\in \overline{A}$. It is well-known that for
every nonempty set $A\subset X$ the function $d_{A}$ is $1-$Lipschitz on $X$%
. In particular, let $G\subset X$ be an open set with non-empty boundary.
Since $G$ is open, $G\cap \partial G$ is empty, hence $d_{\partial G}(x)>0$
for every $x\in G$. Recall that in a connected topological space every
proper non-empty open subset $G$ has a non-empty boundary, since $\emptyset
\neq \overline{G}\setminus G\subset \partial G$. The definitions of the
distance $\rho \left( x,y\right) $ between $x,y\in G$, in the sense of
Gehring-Osgood, Vuorinen, Dovgoshey- Hariri-Vuorinen and Nikolov-Andreev all
involve $d\left( x,y\right) $, $d_{\partial G}(x)$ and $d_{\partial G}(y)$.

We will show that the four generalized hyperbolic-type metrics do not change
the quasiconformal geometry of the space, using the below metric definition
of quasiconformal maps between metric spaces \cite{HeKo}. Given a
homeomorphism $f$ from a metric space $\left( X,d\right) $ to a metric space 
$\left( Y,d^{\prime }\right) $, then for $x\in X$ and $r>0$ we set 
\begin{equation*}
H_{f}\left( x,r\right) =\frac{\sup \left\{ d^{\prime }(f(x),f(y)):d\left(
x,y\right) \leq r\right\} }{\inf \left\{ d^{\prime }(f(x),f(y)):d\left(
x,y\right) \geq r\right\} }.
\end{equation*}%
A homeomorphism $f:\left( X,d\right) \rightarrow \left( Y,d^{\prime }\right) 
$ is called $H-$quasiconformal, with a nonnegative constant $H<\infty $, if $%
\underset{r\rightarrow 0}{\lim \sup }H_{f}\left( x,r\right) \leq H$ for
every $x\in X$. Intuitively, a $H-$quasiconformal homeomorphism between open
sets in $\mathbb{R}^{n}$ maps infinitesimal balls to infinitesimal
ellipsoids with a bounded ratio between the maximum axis and the minimum
axis.

\section{A generalization of the Gehring-Osgood metric}

Let $\left( X,d\right) $ be a metric space and let $G\subset X$ be an open
set with non-empty boundary. For every $x\in X$ consider $\delta \left(
x\right) =d_{\partial G}\left( x\right) =\mathrm{dist}\left( x,\partial
G\right) $, the distance from $x$ to the boundary of $G$.

The Gehring-Osgood metric in $G$ is defined by 
\begin{equation*}
\widetilde{j}_{G}\left( x,y\right) =\frac{1}{2}\log \left( 1+\frac{d(x,y)}{%
\delta \left( x\right) }\right) \left( 1+\frac{d(x,y)}{\delta \left(
y\right) }\right) ,
\end{equation*}%
for all $x,y\in G$ \cite{GehringOs}.

The distance ratio metric was introduced by Vuorinen \cite{Vuorinen} and is
defined by $j_{G}\left( x,y\right) =\log \left( 1+\frac{d(x,y)}{\min \left\{
\delta \left( x\right) ,\delta \left( y\right) \right\} }\right) $.

$\widetilde{j}_{G}$ and $j_{G}$ are well-defined, as $G\cap \partial G$ is
empty. Recall that in a connected topological space every proper non-empty
open subset $G$ has a non-empty boundary, since $\emptyset \neq \overline{G}%
\setminus G\subset \partial G$.

The metrics $\widetilde{j}_{G}$ and $j_{G}$ have been used in the study of
quasiconformal maps, in the setting of $n$-dimensional Euclidean spaces $X=%
\mathbb{R}^{n}$ \cite{GehringOs}, \cite{Vuorinen}, \cite{VuorinenBook}.

H\"{a}st\"{o} \cite{Hasto2006} proved that the Gehring-Osgood metric in an
open set $G\subsetneq \mathbb{R}^{n}$ is Gromov hyperbolic, with a constant
satisfying $\delta \leq \log 3$, where $\log $ is the natural logarithm. The
proof can be extended without modifications to the case where $G$ is an open
set with non-empty boundary in a metric space.

The aim of this section is to obtain a better estimate for the Gromov
hyperbolicity constant $\delta $ of the Gehring-Osgood metric, following
steps similar to that from H\"{a}st\"{o}'s proof, but giving a simpler
structure to the proof.

It is known that $\widetilde{j}_{G}$ is a metric when $X=\mathbb{R}^{n}$.
The case where $\left( X,d\right) $ is a general metric space was studied by
Ibragimov (\cite[3.1]{Ibra}). We will give a proof for a more general case,
where the distance to a closed nonempty proper subset $M$ of $X$ is replaced
by a $1-$Lipschitz function positive on $X\setminus M$.

\begin{theorem}
\label{GOmetric_Gen}Let $\left( X,d\right) $ be a metric space and $M$ be a
closed nonempty proper subset of $X$. Let $F:\left( X\setminus M,d\right)
\rightarrow \left( 0,\infty \right) $ be a $1-$Lipschitz function. The
function defined by $j\left( x,y\right) =\frac{1}{2}\log \left( 1+\frac{%
d(x,y)}{F\left( x\right) }\right) \left( 1+\frac{d(x,y)}{F\left( y\right) }%
\right) $ for $x,y\in X\setminus M$ is a metric on $X\setminus M$.
\end{theorem}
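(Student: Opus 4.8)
The plan is to verify the four metric axioms; symmetry, nonnegativity and the identity of indiscernibles are immediate, so the triangle inequality is the only substantive point. Symmetry is clear because $d(x,y)=d(y,x)$ and the defining product is symmetric in its two factors. For the remaining two axioms, note that $F>0$ on $X\setminus M$ forces each factor to satisfy $1+\frac{d(x,y)}{F(x)}\geq 1$ and $1+\frac{d(x,y)}{F(y)}\geq 1$, so the product is at least $1$ and hence $j(x,y)\geq 0$. Moreover $j(x,y)=0$ holds if and only if both factors equal $1$, i.e. if and only if $d(x,y)=0$, i.e. if and only if $x=y$.

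For the triangle inequality I would first split $j$ additively. Introducing the asymmetric quantity $g(a,b)=\log\left(1+\frac{d(a,b)}{F(a)}\right)$, one has $2\,j(a,b)=g(a,b)+g(b,a)$. Thus the desired bound $j(x,z)\leq j(x,y)+j(y,z)$ is equivalent to
\[
g(x,z)+g(z,x)\leq \left(g(x,y)+g(y,x)\right)+\left(g(y,z)+g(z,y)\right).
\]
I would derive this by summing two instances of a single directed inequality $g(x,z)\leq g(x,y)+g(y,z)$: the instance itself together with the one obtained under the substitution $(x,y,z)\mapsto(z,y,x)$, namely $g(z,x)\leq g(z,y)+g(y,x)$. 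Adding these two produces exactly the right-hand side above, so the whole theorem reduces to the directed inequality.

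To prove the directed inequality I would exponentiate, reducing it to
\[
1+\frac{d(x,z)}{F(x)}\leq \left(1+\frac{d(x,y)}{F(x)}\right)\left(1+\frac{d(y,z)}{F(y)}\right).
\]
Expanding the right-hand side, clearing the positive factor $F(x)$, and invoking the triangle inequality $d(x,z)\leq d(x,y)+d(y,z)$ in $(X,d)$, the claim reduces (after dividing by $d(y,z)$, the case $d(y,z)=0$ being trivial) to the single requirement $F(x)+d(x,y)\geq F(y)$.

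This last step is exactly where the hypothesis is used: $F(x)+d(x,y)\geq F(y)$ is precisely the $1$-Lipschitz bound $F(y)-F(x)\leq d(x,y)$. I expect this to be the key point of the argument and the only place where a structural assumption on $F$ is needed; everything else is algebraic manipulation of the logarithms together with the triangle inequality for $d$. It is worth stressing that the proof never uses that $F=d_{\partial G}$ or any property of $F$ beyond being positive and $1$-Lipschitz, which is what makes the stated generalization go through verbatim.
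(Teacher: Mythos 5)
Your proof is correct, but it takes a genuinely different route from the paper's. The paper attacks the symmetric triangle inequality $j(x,y)\leq j(x,z)+j(z,y)$ directly: writing $a=d(y,z)$, $b=d(z,x)$, $c=d(x,y)$ and $X=1/F(x)$, $Y=1/F(y)$, $Z=1/F(z)$, it reduces the claim to the four-factor inequality $(1+cX)(1+cY)\leq (1+bX)(1+bZ)(1+aY)(1+aZ)$, bounds the left side using $c\leq a+b$, and then matches the expansion term by term against the right side via the two Lipschitz-derived lower bounds $Z\geq Y/(1+aY)$ and $Z\geq X/(1+bX)$ (and their product for the $abZ^{2}$ term). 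You instead split the metric as $2j(a,b)=g(a,b)+g(b,a)$ with the asymmetric half $g(a,b)=\log\bigl(1+\frac{d(a,b)}{F(a)}\bigr)$, prove the directed inequality $g(x,z)\leq g(x,y)+g(y,z)$, and recover the symmetric statement by adding it to its reversal $g(z,x)\leq g(z,y)+g(y,x)$; after exponentiating, expanding, and using $d(x,z)\leq d(x,y)+d(y,z)$, your directed inequality collapses to the single Lipschitz bound $F(y)\leq F(x)+d(x,y)$, and your handling of the degenerate case $d(y,z)=0$ is fine. The analytic ingredients are identical in the end --- both proofs use the triangle inequality for $d$ once and the $1$-Lipschitz property of $F$ at the middle point relative to each endpoint --- but your decomposition is more modular: it isolates the observation that the one-sided quantity $g$ is itself a quasi-metric, makes the exact role of the hypothesis transparent, and replaces the four-factor bookkeeping by a two-factor computation. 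What the paper's heavier direct calculation buys is that its notation and reduction scheme are reused in the Gromov hyperbolicity argument of Theorem \ref{G-O}, and that it displays the equality case explicitly (equality holds iff $c=a+b$ and both Lipschitz estimates are equalities), which the Remark following the Corollary exploits; your route yields the same characterization, but you would have to record the equality conditions separately in each of the two directed instances.
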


\begin{proof}
It suffices to prove the triangle inequality $j\left( x,y\right) \leq
j\left( x,z\right) +j\left( z,y\right) $ for every $x,y,z\in X\setminus M$. 
\newline
Denote $d\left( y,z\right) =a$, $d\left( z,x\right) =b$ and $d\left(
x,y\right) =c$. Also, denote $\frac{1}{F(x)}=X$, $\frac{1}{F\left( y\right) }%
=Y$ and $\frac{1}{F\left( z\right) }=Z$. \newline
The $1-$Lipschitz property of $F$ shows that $\left\vert \frac{1}{X}-\frac{1%
}{Y}\right\vert \leq c$, $\left\vert \frac{1}{Y}-\frac{1}{Z}\right\vert \leq
a$ and $\left\vert \frac{1}{Z}-\frac{1}{X}\right\vert \leq b$. The above
triangle inequality for the function $j$ is equivalent to 
\begin{equation}
\left( 1+cX\right) \left( 1+cY\right) \leq \left( 1+bX\right) \left(
1+bZ\right) \left( 1+aY\right) \left( 1+aZ\right) .  \label{To_Do_GO}
\end{equation}%
We have $c\leq a+b$, by triangle inequality for the metric $d$ on $X$. Then
the left-hand side in (\ref{To_Do_GO}) can be estimated as $\left(
1+cX\right) \left( 1+cY\right) \leq \left( 1+\left( a+b\right) X\right)
\left( 1+\left( a+b)\right) Y\right) $, i.e. 
\begin{equation}
\left( 1+cX\right) \left( 1+cY\right) \leq \left( 1+aY\right) \left(
1+bX\right) +aX\left( 1+aY\right) +bY\left( 1+bX\right) +abXY.  \label{Step1}
\end{equation}%
On the other hand, the right-hand side in (\ref{To_Do_GO}) can be written as 
\begin{equation*}
\left( 1+aY\right) \left( 1+bX\right) \left( 1+aZ+bZ+abZ^{2}\right) .
\end{equation*}%
But $\frac{1}{Z}-\frac{1}{Y}\leq a$, whence $Z\geq \frac{Y}{1+aY}$,
therefore \newline
$\left( 1+aY\right) \left( 1+bX\right) bZ\geq bY\left( 1+bX\right) $. 
\newline
Similarly, $\frac{1}{Z}-\frac{1}{X}\leq b$ implies $Z\geq \frac{X}{1+bX}$,
hence $\left( 1+aY\right) \left( 1+bX\right) aZ\geq aX\left( 1+aY\right) $. 
\newline
Moreover, multiplying the above estimates of $Z$ we get $\left( 1+aY\right)
\left( 1+bX\right) abZ^{2}\geq abXY$. \newline
Adding the latter three inequalities we get 
\begin{eqnarray}
&&\left( 1+bX\right) \left( 1+bZ\right) \left( 1+aY\right) \left( 1+aZ\right)
\label{Step2} \\
&\geq &\left( 1+aY\right) \left( 1+bX\right) +aX\left( 1+aY\right) +bY\left(
1+bX\right) +abXY.  \notag
\end{eqnarray}%
By (\ref{Step1}) and (\ref{Step2}) we get (\ref{To_Do_GO}).
\end{proof}

\begin{corollary}
For every metric space $\left( X,d\right) $ and every open set $G\subset X$
with non-empty boundary, the function defined by $\widetilde{j}_{G}\left(
x,y\right) =\frac{1}{2}\log \left( 1+\frac{d(x,y)}{\delta \left( x\right) }%
\right) \left( 1+\frac{d(x,y)}{\delta \left( y\right) }\right) $, $x,y\in
X\setminus \partial G$ is a metric on $G\cup \left( X\setminus \overline{G}%
\right) $.
\end{corollary}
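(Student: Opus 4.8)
The plan is to obtain this Corollary as an immediate application of Theorem \ref{GOmetric_Gen}, using the specific instantiation $M = \partial G$ and $F = \delta = d_{\partial G}$. The entire analytic content — in particular the triangle inequality — is already carried by the theorem, so the work reduces to checking that these two choices meet the three hypotheses (namely that $M$ is a closed nonempty proper subset of $X$ and that $F$ is a $1$-Lipschitz function that is positive on $X \setminus M$), after which the conclusion follows verbatim.

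First I would verify that $M = \partial G$ is a closed nonempty proper subset of $X$. The boundary of any set is closed, and $\partial G \neq \emptyset$ holds by hypothesis. To see that $\partial G$ is proper, I note that since $G$ is open we have $G \cap \partial G = \emptyset$, so $G \subseteq X \setminus \partial G$; moreover $\partial G \neq \emptyset$ forces $G \neq \emptyset$ (since $\partial \emptyset = \emptyset$), and therefore $X \setminus \partial G \supseteq G \neq \emptyset$, i.e. $\partial G \neq X$.

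Next I would check that $F = \delta = d_{\partial G}$ has the required profile. As recalled in the introductory discussion, the distance function $d_A$ to any nonempty set $A$ is $1$-Lipschitz on $X$; applying this with $A = \partial G$ shows that $\delta$ is $1$-Lipschitz. Because $\partial G$ is closed, the vanishing condition $\delta(x) = d_{\partial G}(x) = 0$ holds exactly when $x \in \partial G$, so $\delta$ is strictly positive on $X \setminus \partial G$. Thus $\delta$ restricts to a $1$-Lipschitz map $(X \setminus \partial G, d) \to (0,\infty)$, as required.

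Finally, I would match up the domain. Since $G$ is open we have $\partial G = \overline{G} \setminus G$, and consequently $X \setminus \partial G = (X \setminus \overline{G}) \cup G = G \cup (X \setminus \overline{G})$. With these identifications the function $j$ produced by Theorem \ref{GOmetric_Gen} coincides with $\widetilde{j}_G$, and the theorem asserts precisely that it is a metric on $X \setminus M = G \cup (X \setminus \overline{G})$. I do not expect any genuine obstacle here: the only nontrivial point is the bookkeeping of the set identity $X \setminus \partial G = G \cup (X \setminus \overline{G})$, while all the metric-space estimates have been discharged by the theorem.
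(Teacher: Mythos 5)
Your proposal is correct and takes essentially the same route as the paper: both obtain the corollary by applying Theorem \ref{GOmetric_Gen} with $M=\partial G$ and $F=\delta =d_{\partial G}$, noting that $\delta $ is $1$-Lipschitz and strictly positive off $\partial G$ because $\partial G$ is closed and $G\cap \partial G=\emptyset $. Your extra bookkeeping (properness of $\partial G$ and the identity $X\setminus \partial G=G\cup \left( X\setminus \overline{G}\right) $) merely makes explicit details the paper leaves implicit.
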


\begin{proof}
The function $\delta \left( \cdot \right) =\mathrm{dist}\left( \cdot
,\partial G\right) $ is $1-$Lipschitz on $X$ and $\delta \left( x\right) =0$
if and only if $x\in \partial G$. Since $G\cap \partial G$ is empty, $\delta
(x)\neq 0$ for every $x\in G$. We apply Theorem \ref{GOmetric_Gen} with $%
M=\partial G$ and $F\left( x\right) =\delta \left( x\right) $, $x\in
X\setminus \partial G$.
\end{proof}

\begin{remark}
From the proof of Theorem \ref{GOmetric_Gen} we can see that $\widetilde{j}%
_{G}\left( x,y\right) =\widetilde{j}_{G}\left( x,z\right) +\widetilde{j}%
_{G}\left( z,y\right) $ if and only if the following three equalities hold: $%
d\left( x,y\right) =d(x,z)+d(z,y)$, $\mathrm{dist}\left( z,\partial G\right)
=\mathrm{dist}\left( x,\partial G\right) +d\left( x,z\right) $ and $\mathrm{%
dist}\left( z,\partial G\right) =\mathrm{dist}\left( y,\partial G\right)
+d\left( y,z\right) $. For example, for $x,y,z\in G=\mathbb{H}^{n}\subset 
\mathbb{R}^{n}$ we get $\widetilde{j}_{G}\left( x,y\right) =\widetilde{j}%
_{G}\left( x,z\right) +\widetilde{j}_{G}\left( z,y\right) $ if there exist $%
h\in \mathbb{R}^{n-1}$ and $0<y_{n}<z_{n}<x_{n}$ in $\mathbb{R}$ such that $%
x=\left( h,x_{n}\right) $, $y=\left( h,y_{n}\right) $ and $z=\left(
h,z_{n}\right) $.
\end{remark}

\begin{theorem}
\label{G-O}Let $\left( X,d\right) $ be a metric space and $M$ be a closed
nonempty proper subset of $X$. Let $F:\left( X\setminus M,d\right)
\rightarrow \left( 0,\infty \right) $ be a $1-$Lipschitz function. The
metric defined by $j\left( x,y\right) =\frac{1}{2}\log \left( 1+\frac{d(x,y)%
}{F\left( x\right) }\right) \left( 1+\frac{d(x,y)}{F\left( y\right) }\right) 
$ for $x,y\in X\setminus M$ is Gromov hyperbolic with a constant $\delta
\leq \frac{1}{4}\log 24$.
\end{theorem}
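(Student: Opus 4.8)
The plan is to verify the four-point characterization of Gromov hyperbolicity recorded in the Introduction, namely that $j$ is $\delta$-hyperbolic as soon as
\[ j(x,y)+j(z,w)\le \max\{\,j(x,z)+j(y,w),\ j(x,w)+j(y,z)\,\}+2\delta \]
holds for all $x,y,z,w\in X\setminus M$. Abbreviating $a=d(x,y)$, $b=d(z,w)$, $p=d(x,z)$, $q=d(y,w)$, $r=d(x,w)$, $s=d(y,z)$ and using $e^{2j(u,v)}=\bigl(1+\tfrac{d(u,v)}{F(u)}\bigr)\bigl(1+\tfrac{d(u,v)}{F(v)}\bigr)$, I would double the displayed inequality, exponentiate, and multiply through by the common factor $F(x)F(y)F(z)F(w)$. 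With $\delta=\tfrac14\log 24$, so that $e^{4\delta}=24$, the four-point condition becomes the purely algebraic inequality
\[ (F(x)+a)(F(y)+a)(F(z)+b)(F(w)+b)\le 24\,\max\{A,B\}, \]
where $A=(F(x)+p)(F(z)+p)(F(y)+q)(F(w)+q)$ and $B=(F(x)+r)(F(w)+r)(F(y)+s)(F(z)+s)$, to be established under the triangle inequalities for $d$ and the Lipschitz bounds $|F(u)-F(v)|\le d(u,v)$. Since exchanging $z$ and $w$ fixes the left-hand side and interchanges $A$ and $B$, I may assume throughout that $B=\max\{A,B\}$.

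The first reduction is to eliminate the diagonals $a$ and $b$. Each admits two triangle estimates, giving $a\le\min\{p+s,\,q+r\}=:m_a$ and $b\le\min\{p+r,\,q+s\}=:m_b$, so the left-hand side is at most $(F(x)+m_a)(F(y)+m_a)(F(z)+m_b)(F(w)+m_b)$. I want to stress that it is essential to keep the \emph{minimum} here: replacing $m_a$ by $p+s$ in one factor and by $q+r$ in another, as a careless triangulation would, is far too wasteful and destroys any uniform bound, as one sees already by taking $p=s$ small and $q=r$ large. Thus the theorem comes down to the four-variable inequality
\[ (F(x)+m_a)(F(y)+m_a)(F(z)+m_b)(F(w)+m_b)\le 24\,B, \]
in the four $F$-values and the four edges $p,q,r,s$, subject only to the constraints $|F(x)-F(z)|\le p$, $|F(y)-F(w)|\le q$, $|F(x)-F(w)|\le r$, $|F(y)-F(z)|\le s$. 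Up to the scaling $d\mapsto\lambda d$, $F\mapsto\lambda F$, which leaves $j$ invariant, one normalization (for instance $\min_u F(u)=1$) is available to cut down the free parameters.

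The heart of the argument, and the step I expect to be the real obstacle, is this last inequality. The two minima force a case distinction according to which of $p+s,q+r$ realizes $m_a$ and which of $p+r,q+s$ realizes $m_b$; each choice pairs the surviving edges with vertices in a definite way and lets one compare each factor $F(u)+m_a$ or $F(u)+m_b$ on the left with the corresponding factor of $B$ through an elementary one-variable estimate, the resulting overhead being absorbed by means of the standing assumption $B\ge A$. Tracking the worst case through these sub-cases is what produces the numerical value $24$; I do not expect this constant to be sharp, and in fact the symmetric configuration with $p=q=r=s$ and $F$ tending to a constant already forces the four-point ratio up to $16$, so the true Gromov constant cannot be pushed below $\tfrac14\log 16$. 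The remaining and purely mechanical task would then be to record that the algebraic inequality holds in each case, completing the verification of the four-point condition and hence of Gromov hyperbolicity with $\delta\le\tfrac14\log 24$.
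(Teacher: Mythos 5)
There is a genuine gap: your proposal sets the problem up correctly but stops exactly where the theorem begins. The exponentiation to the algebraic four-point inequality with $e^{4\delta}=24$, the swap of $z$ and $w$ to normalize $B=\max\{A,B\}$, and the elimination of the diagonals via $a\le m_a=\min\{p+s,q+r\}$, $b\le m_b=\min\{p+r,q+s\}$ all match the opening moves of the paper's proof (its Steps 1--4, in different notation), and your closing remark is even a correct sharpness observation absent from the paper: on the $4$-cycle with sides $t$ and $F\equiv c\to 0$ the four-point ratio tends to $16$, so no argument of this kind can give $\delta<\frac14\log 16=\log 2$. But the entire content of the theorem is the ``last inequality'' that you defer as a ``purely mechanical'' case analysis whose ``overhead'' is ``absorbed by means of the standing assumption $B\ge A$.'' Nothing of the sort is carried out, and the mechanism you name would not work as described: comparing each left factor with a corresponding factor of $B$ one at a time fails, since ratios such as $\frac{F(x)+m_a}{F(x)+r}$ are unbounded in general, and the hypothesis $B\ge A$ is never what rescues this.

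What actually closes the argument in the paper is a different and genuinely non-mechanical combination of three ingredients, none of which appears in your plan. First, instead of normalizing $B\ge A$, one normalizes so that the smallest of the four sides is a specific one (there, $d(x,w)$), which yields the key comparisons ``each diagonal is at most twice a suitable side'' (in the paper's notation $e\le 2b$, $f\le 2d$, and $e\le 2d$ after also assuming $e\le f$); this is what makes three of the four factor ratios bounded by $2$ via the elementary estimate $\frac{1+pt}{1+qt}\le\max\left\{1,\frac{p}{q}\right\}$. Second, the remaining ratio splits into two cases, and in the hard case the $1$-Lipschitz hypothesis enters irreplaceably: from $\frac{1}{F(w)}\ge\frac{1}{F(x)+d(x,w)}$ one substitutes and treats \emph{two} factors jointly, obtaining $\frac{1+fX}{1+bX}\cdot\frac{1+eW}{1+dW}\le\frac{1+fX}{1+(a+d)X}\cdot\frac{1+(a+e)X}{1+bX}\le 3$ in the paper's notation; no factorwise scheme sees this cancellation, and your reduction never indicates where the Lipschitz constraints (which you list but never use) would be invoked. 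Third, a minor but real defect of your reduction as stated: by keeping ``only'' the four side Lipschitz constraints you silently discard the quadrilateral inequalities among $p,q,r,s$ (e.g.\ $p\le q+r+s$) that hold in any metric realization, so your target inequality is a strictly stronger statement than needed, and you offer no evidence it remains true with constant $24$. In short: correct framing and a valid lower-bound remark, but the case analysis that produces $24$ is the theorem, and it is missing.
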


\begin{proof}
Let $x,y,z,w\in X\setminus M$. Denote $d\left( x,w\right) =a$, $d\left(
x,y\right) =b$, $d\left( y,z\right) =c$, $d\left( z,w\right) =d$, $d\left(
y,w\right) =e$ and $d\left( x,z\right) =f$. Also, we denote $\frac{1}{%
F\left( x\right) }=X$, $\frac{1}{F\left( y\right) }=Y$, $\frac{1}{F\left(
z\right) }=Z$ and $\frac{1}{F\left( w\right) }=W$. \newline
\emph{Step 1.} We look for $\delta >0$ independent on $x,y,z,w$ such that 
\begin{equation*}
j(x,z)+j(y,w)\leq \max \left\{ j(x,w)+j(y,z),j(x,y)+j(z,w)\right\} +2\delta .
\end{equation*}%
Denoting $e^{4\delta }=k$ and using the above notations, the previous
inequality is equivalent to the following logic disjunction: 
\begin{eqnarray*}
(1+fX)\left( 1+fZ\right) (1+eY)(1+eW) &\leq &k(1+aX)(1+aW)(1+cY)\left(
1+cW\right) \text{ or} \\
(1+fX)\left( 1+fZ\right) (1+eY)(1+eW) &\leq &k(1+bX)(1+bY)(1+dZ)\left(
1+dW\right) .
\end{eqnarray*}%
\emph{Step 2.} As in the proof of H\"{a}st\"{o}, we may assume that $e\leq f$%
, by symmetry. \newline
\emph{Step 3.} We also may assume that $\min \left\{ a,b,c,d\right\} =a$. 
\newline
This condition will be satisfied after using the following changes: $S1$
(swap $x$ and $z$) and/ or $S2$ (swap $y$ and $w$). We use $S2$ if $\min
\left\{ a,b,c,d\right\} =b$, $S1$ and $S2$ if $\min \left\{ a,b,c,d\right\}
=c$, respectively $S1$ if $\min \left\{ a,b,c,d\right\} =d$. \newline
\emph{Step 4.} By triangle inequality, $e\leq \min \left\{ a+b,c+d\right\} $
and $f\leq \min \left\{ a+d,b+c\right\} $. \newline
\emph{Step 5.} Since we assumed $\min \left\{ a,b,c,d\right\} =a$, it
follows that $e\leq 2b$ and $f\leq 2d$. \newline
\emph{Step 6.} $e\leq f$ and $f\leq 2d$ \ imply $e\leq 2d$. \newline
\emph{Step 7.} The $1-$Lipschitz continuity of $F$ implies $\left\vert \frac{%
1}{X}-\frac{1}{Y}\right\vert \leq b$, $\left\vert \frac{1}{X}-\frac{1}{Z}%
\right\vert \leq f$, $\left\vert \frac{1}{X}-\frac{1}{W}\right\vert \leq a$, 
$\left\vert \frac{1}{Y}-\frac{1}{Z}\right\vert \leq c$, $\left\vert \frac{1}{%
Y}-\frac{1}{W}\right\vert \leq e$ and $\left\vert \frac{1}{Z}-\frac{1}{W}%
\right\vert \leq d$.\newline
\emph{Step 8.} The function $t\mapsto \frac{1+pt}{1+qt}$, $t\in \left[
0,\infty \right) $ is increasing for $p>q$ and decreasing for $p<q$, hence $%
\frac{1+pt}{1+qt}\leq \max \left\{ 1,\frac{p}{q}\right\} $ for every $t\in %
\left[ 0,\infty \right) $ and this inequality is sharp. \newline
\emph{Step 9.} By Step 8 and Step 5, $\frac{1+fZ}{1+dZ}\leq 2$ and $\frac{%
1+eY}{1+bY}\leq 2$. \newline
Similarly, by Step 8 and Step 6, $\frac{1+eW}{1+dW}\leq 2$. It remains to
estimate from above $\frac{1+fX}{1+bX}$. \newline
\emph{Step 10. }In the following we will compare $d$\ and $e$. \newline
\textbf{Case 1. }Assume $d\leq e$. \newline
Then $f\leq a+d\leq a+e$, but $a+e\leq 3b$, by Step 3 and Step 5. \newline
Using Step 8, it follows that $\frac{1+fX}{1+bX}\leq 3$. This inequality and
inequalities from Step 9 imply that the second inequality from Step 1 is
satisfied with $k=24$. \newline
\textbf{Case 2. }Assume $e<d$. We will use an idea from the proof of H\"{a}st%
\"{o}. The function $t\mapsto \frac{1+et}{1+dt}$ is decreasing. From Step 7
we get $\frac{1}{W}\leq a+\frac{1}{X}$, hence $W\geq \frac{X}{1+aX}$. Then $%
\frac{1+eW}{1+dW}\leq \frac{1+e\frac{X}{1+aX}}{1+d\frac{X}{1+aX}}=\frac{%
1+(a+e)X}{1+\left( a+d\right) X}$, hence $\frac{1+fX}{1+bX}\cdot \frac{1+eW}{%
1+dW}\leq \frac{1+fX}{1+\left( a+d\right) X}\cdot \frac{1+\left( a+e\right) X%
}{1+bX}$. \newline
By Step 4 and Step 8, we get $\frac{1+fX}{1+\left( a+d\right) X}\leq 1$. 
\newline
By Step 3 and Step 5, $a+e\leq 3b$, hence using Step 8, $\frac{1+\left(
a+e\right) X}{1+bX}\leq 3$. \newline
The latter two inequalities and inequalities from Step 9 imply that the
second inequality from Step 1 is satisfied with $k=12$ in Case 2. \newline
\emph{Step 11. }The claim follows with $k=24$, therefore taking logarithms
and dividing by $2$ we obtain the Gromov hyperbolicity of the metric $j$
with a constant $\delta \leq \frac{1}{4}\log 24$. Note that $\log 3-\frac{1}{%
4}\log 24>0.3$.
\end{proof}

\begin{corollary}
For every metric space $\left( X,d\right) $ and every $G\subset X$ with
non-empty boundary, the Gehring-Osgood metric $\widetilde{j}_{G}$ is Gromov
hyperbolic on $G$ with a Gromov constant $\delta \leq \frac{1}{4}\log 24$.
\end{corollary}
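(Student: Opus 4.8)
The plan is to obtain this statement as an immediate specialization of Theorem \ref{G-O}, so that the work reduces to verifying the hypotheses of that theorem and then observing that Gromov hyperbolicity descends to the subspace $G$. First I would set $M=\partial G$ and check the three conditions imposed on $M$ in Theorem \ref{G-O}: the boundary $\partial G$ is always closed; it is non-empty by hypothesis; and it is a proper subset of $X$, because a non-empty boundary forces $G\neq\emptyset$, and since $G$ is open we have $G\cap\partial G=\emptyset$, so $G\neq\emptyset$ gives $\partial G\neq X$.

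Next I would take $F=\delta=\mathrm{dist}(\cdot,\partial G)$. As recalled in the introduction, the distance function to any non-empty set is $1$-Lipschitz on $X$, and $\delta(x)=0$ if and only if $x\in\overline{\partial G}=\partial G$; hence $\delta$ is positive on $X\setminus\partial G$ and defines a $1$-Lipschitz map $F\colon(X\setminus\partial G,d)\to(0,\infty)$. With these choices the function $j$ of Theorem \ref{G-O} is precisely $\widetilde{j}_{G}$, so Theorem \ref{G-O} yields that $\widetilde{j}_{G}$ is Gromov hyperbolic on $X\setminus\partial G$ with a Gromov constant $\delta\leq\frac{1}{4}\log 24$.

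Finally, since $G$ is open we have $G\subset X\setminus\partial G$, so $\widetilde{j}_{G}$ is well-defined on $G$, and I would conclude by noting that Gromov hyperbolicity passes to any subspace with the same constant: the defining inequality, in either the Gromov-product form or the symmetric four-point form recalled in the introduction, involves only finitely many points and holds for all such configurations in $X\setminus\partial G$, hence in particular for all configurations in $G$. I do not expect any genuine obstacle here, as all the analytic difficulty is already absorbed into Theorem \ref{G-O}; the only point deserving a moment's care is the properness of $\partial G$, which rests on $G$ being non-empty and open.
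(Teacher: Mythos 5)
Your proposal is correct and takes essentially the same route as the paper: the corollary is obtained by specializing Theorem \ref{G-O} with $M=\partial G$ and $F=\mathrm{dist}\left( \cdot ,\partial G\right) $, exactly as the paper does (explicitly in its corollary to Theorem \ref{GOmetric_Gen}, and tacitly here). Your additional checks --- that $\partial G$ is closed, non-empty and proper, that $\delta $ is positive and $1$-Lipschitz on $X\setminus \partial G$, and that the four-point Gromov condition restricts from $X\setminus \partial G$ to $G$ with the same constant --- are all valid and merely make explicit what the paper leaves unstated.
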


\begin{theorem}
Let $\left( X,d\right) $ be a metric space, $M$ be a nonempty proper closed
subset of $X$ and $j$ be the generalized Gehring-Osgood metric from Theorem %
\ref{GOmetric_Gen}. Then the identity map $1_{X\setminus M}:\left(
X\setminus M,d\right) \rightarrow \left( X\setminus M,j\right) $ is $1-$%
quasiconformal.
\end{theorem}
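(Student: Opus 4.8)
The plan is to verify the metric definition of quasiconformality directly. Writing $f=1_{X\setminus M}$ and $d^{\prime}=j$, so that $d^{\prime}(f(x),f(y))=j(x,y)$, it suffices to show that $\limsup_{r\to 0^{+}}H_f(x,r)\le 1$ for every $x\in X\setminus M$, since a $1$-quasiconformal map is by definition one for which this limit superior is bounded by $H=1$. I would fix $x$ and abbreviate $s=F(x)>0$ and $t=d(x,y)$; the whole question then reduces to the infinitesimal distortion of $j$ near $x$.

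The key tool is the $1$-Lipschitz property of $F$, which gives $s-t\le F(y)\le s+t$ whenever $d(x,y)=t$. Substituting these two-sided bounds into the definition of $j$ while keeping the exact factor $1+t/s$ contributed by $F(x)$, I would sandwich
\[
g_-(t):=\tfrac12\log\!\left[\left(1+\tfrac{t}{s}\right)\left(1+\tfrac{t}{s+t}\right)\right]\le j(x,y)\le \tfrac12\log\!\left[\left(1+\tfrac{t}{s}\right)\left(1+\tfrac{t}{s-t}\right)\right]=:g_+(t),
\]
where the lower bound holds for every $t\ge 0$ (as $0<F(y)\le s+t$ always), while the upper bound requires $t<s$ (so that $F(y)\ge s-t>0$). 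A short monotonicity check shows that both $g_-$ and $g_+$ are increasing on their domains, since each of the quotients $t/s$, $t/(s+t)$ and $t/(s-t)$ is increasing in $t$.

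With the sandwich in hand, for $0<r<s$ the numerator of $H_f(x,r)$ is controlled by the upper bound at the largest admissible distance: every $y$ with $d(x,y)\le r$ satisfies $j(x,y)\le g_+(t)\le g_+(r)$, whence $\sup\{j(x,y):d(x,y)\le r\}\le g_+(r)$. Dually, every $y$ with $d(x,y)=t\ge r$ satisfies $j(x,y)\ge g_-(t)\ge g_-(r)$, whence $\inf\{j(x,y):d(x,y)\ge r\}\ge g_-(r)>0$. Therefore
\[
H_f(x,r)\le \frac{g_+(r)}{g_-(r)}=\frac{\log\!\left[\left(1+\tfrac{r}{s}\right)\left(1+\tfrac{r}{s-r}\right)\right]}{\log\!\left[\left(1+\tfrac{r}{s}\right)\left(1+\tfrac{r}{s+r}\right)\right]}.
\]
Letting $r\to 0^{+}$ and using $\log(1+u)=u+O(u^{2})$, both numerator and denominator of this fraction are asymptotic to $2r/s$, so the quotient tends to $1$. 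This yields $\limsup_{r\to 0^{+}}H_f(x,r)\le 1$ for every $x$, which is exactly $1$-quasiconformality.

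I expect the only genuinely delicate point to be the asymmetry in the sandwich: the upper estimate $g_+$ is available only for $t<s$, so it may be used for the numerator (where distances are $\le r$ and hence eventually small), but it must \emph{not} be used for the denominator, whose competing points range over arbitrarily large distances; there the globally valid lower bound $g_-$ is precisely what makes the estimate go through. The monotonicity and the final asymptotic computation are then routine.
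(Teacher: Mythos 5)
Your core estimate is correct and is essentially the paper's own argument: your sandwich $g_-(t)\le j(x,y)\le g_+(t)$ is exactly the paper's pair of bounds (its inequality (\ref{ja}) from $F(y)\ge F(x)-d(x,y)$, and the lower bound from $F(y)\le F(x)+d(x,y)$ that underlies (\ref{jb})), your monotonicity-based control $\sup\{j(x,y):d(x,y)\le r\}\le g_+(r)$ and $\inf\{j(x,y):d(x,y)\ge r\}\ge g_-(r)$ matches the paper's, and the final limit differs only cosmetically (you expand $\log(1+u)=u+O(u^2)$ where the paper invokes L'Hospital's rule; both yield $1$). You also correctly flag the asymmetry that the paper handles implicitly: $g_+$ is only valid for $t<F(x)$, while the infimum over arbitrarily large distances needs the globally valid $g_-$.

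The one genuine omission is definitional: the metric definition of $H$-quasiconformality quoted in the paper applies to \emph{homeomorphisms}, so before any distortion estimate you must check that $1_{X\setminus M}$ is a homeomorphism between the topologies induced by $d$ and by $j$ --- these are a priori different metrics, and the paper devotes the first half of its proof to exactly this (continuity via (\ref{ja}), openness via (\ref{jb})). Your statement that a $1$-quasiconformal map ``is by definition one for which this limit superior is bounded by $H=1$'' quietly drops that requirement. Fortunately your own sandwich closes the gap in two lines: if $d(x_n,x)\to 0$ then eventually $d(x_n,x)<F(x)$ and $j(x_n,x)\le g_+(d(x_n,x))\to 0$, giving continuity; conversely, since $g_-$ is continuous, increasing, and positive for $t>0$, $j(x_n,x)\to 0$ together with $j(x_n,x)\ge g_-(d(x_n,x))$ forces $d(x_n,x)\to 0$, giving openness. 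You should state this step explicitly rather than leave it implicit.
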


\begin{proof}
Let $x\in X$. For every $y\in X$ such that $d(x,y)<F(x)$, 
\begin{equation}
j\left( x,y\right) \leq \frac{1}{2}\log \left( 1+\frac{d(x,y)}{F\left(
x\right) }\right) \left( 1+\frac{d(x,y)}{F\left( x\right) -d\left(
x,y\right) }\right) .  \label{ja}
\end{equation}%
Then $\underset{n\rightarrow \infty }{\lim }d\left( x_{n},x\right) =0$
implies $\underset{n\rightarrow \infty }{\lim }j\left( x_{n},x\right) =0$,
therefore the identity map $1_{X\setminus M}:\left( X\setminus M,d\right)
\rightarrow \left( X\setminus M,j\right) $\ is continuous. \newline
For every $y\in X$, $j\left( x,y\right) \geq \frac{1}{2}\log \left( 1+\frac{%
d(x,y)}{F\left( x\right) }\right) \left( 1+\frac{d(x,y)}{F\left( x\right)
+d\left( x,y\right) }\right) $, hence \newline
$j\left( x,y\right) \geq \log \left( 1+\frac{d(x,y)}{F\left( x\right)
+d\left( x,y\right) }\right) $, which implies 
\begin{equation}
d(x,y)\leq F(x)\frac{e^{j(x,y)}-1}{2-e^{j(x,y)}}\text{ if }j\left(
x,y\right) <2\text{. }  \label{jb}
\end{equation}%
If $\underset{n\rightarrow \infty }{\lim }j\left( x_{n},x\right) =0$, (\ref%
{jb}) shows that $\underset{n\rightarrow \infty }{\lim }d\left(
x_{n},x\right) =0$, therefore the identity $1_{X\setminus M}:\left(
X\setminus M,d\right) \rightarrow \left( X\setminus M,j\right) $\ is open,
hence it is a homeomorphism. \newline
\newline
For $f=1_{X}:\left( X,d\right) \rightarrow \left( X\setminus M,j\right) $ we
have $H_{f}\left( x,r\right) =\frac{\sup \left\{ j(x,y):d\left( x,y\right)
\leq r\right\} }{\inf \left\{ j(x,y):d\left( x,y\right) \geq r\right\} }$.
For $0<r<F(x)$, (\ref{ja}) implies $\sup \left\{ j(x,y):d\left( x,y\right)
\leq r\right\} \leq \frac{1}{2}\log \left( 1+\frac{r}{F\left( x\right) }%
\right) \left( 1+\frac{r}{F\left( x\right) -r}\right) $. \newline
For every $r>0$, $\inf \left\{ j(x,y):d\left( x,y\right) \geq r\right\} \geq 
\frac{1}{2}\log \left( 1+\frac{r}{F\left( x\right) }\right) \left( 1+\frac{r%
}{F\left( x\right) +r}\right) $. \newline
Then $\underset{r\rightarrow 0}{\lim \sup }H_{f}\left( x,r\right) \leq 
\underset{r\rightarrow 0}{\lim \sup }\frac{\log \left( 1+\frac{r}{F\left(
x\right) }\right) \left( 1+\frac{r}{F\left( x\right) -r}\right) }{\log
\left( 1+\frac{r}{F\left( x\right) }\right) \left( 1+\frac{r}{F\left(
x\right) +r}\right) }$. \newline
By L'Hospital' rule, $\underset{r\rightarrow 0}{\lim }\frac{\log \left( 1+%
\frac{r}{F\left( x\right) }\right) \left( 1+\frac{r}{F\left( x\right) -r}%
\right) }{\log \left( 1+\frac{r}{F\left( x\right) }\right) \left( 1+\frac{r}{%
F\left( x\right) +r}\right) }=\underset{r\rightarrow 0}{\lim }\frac{\frac{1}{%
F(x)+r}+\frac{1}{F(x)-r}}{\frac{2}{F(x)+2r}}=1$. It follows that the
homeomorphism $1_{X\setminus M}:\left( X\setminus M,d\right) \rightarrow
\left( X\setminus M,j\right) $\ is $1-$quasiconformal.
\end{proof}

\section{A generalization of the Dovgoshey-Hariri-Vuorinen metric}

Dovgoshey-Hariri-Vuorinen' s metric has been introduced in \cite{DHV2016}
and has its roots in the study of distance ratio metric.

\begin{theorem}
\label{DHVgeneralized}Let $\left( X,d\right) $ be a metric space and $M$ be
a closed nonempty proper subset of $X$. Let $F:\left( X\setminus M,d\right)
\rightarrow \left( 0,\infty \right) $ be a $1-$Lipschitz function. If $c\geq
2$ is a constant, then the function defined by $h_{c}\left( x,y\right) =\log
\left( 1+c\frac{d(x,y)}{\sqrt{F(x)F\left( y\right) }}\right) $ for every $%
x,y\in X\setminus M$ is a metric on $X\setminus M$.
\end{theorem}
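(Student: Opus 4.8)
The plan is to check the three metric axioms; symmetry is immediate from the symmetric appearance of $F(x)$ and $F(y)$ in the geometric mean $\sqrt{F(x)F(y)}$, and $h_c(x,y)=0$ if and only if $d(x,y)=0$ if and only if $x=y$, since $F>0$ on $X\setminus M$. The only real content is the triangle inequality $h_c(x,y)\le h_c(x,z)+h_c(z,y)$. Following the strategy of Theorem~\ref{GOmetric_Gen}, I would exponentiate to turn the sum of logarithms into a product, so that the claim becomes
\[
1+c\frac{d(x,y)}{\sqrt{F(x)F(y)}}\le\left(1+c\frac{d(x,z)}{\sqrt{F(x)F(z)}}\right)\left(1+c\frac{d(z,y)}{\sqrt{F(z)F(y)}}\right).
\]
Abbreviating $p=d(x,y)$, $q=d(x,z)$, $r=d(z,y)$ and $u=F(x)$, $v=F(y)$, $w=F(z)$, I would expand the right-hand side, subtract $1$ and divide by the common factor $c$, then multiply through by $\sqrt{uv}$, reducing the goal to
\[
p\le q\sqrt{\tfrac{v}{w}}+r\sqrt{\tfrac{u}{w}}+c\,\frac{qr}{w}.
\]

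Next I would apply the triangle inequality $p\le q+r$ for $d$, after which it suffices to prove
\[
q\left(1-\sqrt{\tfrac{v}{w}}\right)+r\left(1-\sqrt{\tfrac{u}{w}}\right)\le c\,\frac{qr}{w}.
\]
The crux --- and the step I expect to be the main obstacle, since the geometric-mean normalization introduces square roots absent from the Gehring--Osgood computation --- is the elementary estimate $1-\sqrt{v/w}\le r/w$ together with its symmetric companion $1-\sqrt{u/w}\le q/w$. To establish the former, I would note that if $v\ge w$ the left-hand side is nonpositive and there is nothing to prove, while if $v<w$ I would write $\sqrt w-\sqrt v=(w-v)/(\sqrt w+\sqrt v)$, bound the denominator below by $\sqrt w$, and invoke the $1$-Lipschitz property of $F$ in the form $w-v=|F(z)-F(y)|\le d(z,y)=r$ to obtain $1-\sqrt{v/w}=(\sqrt w-\sqrt v)/\sqrt w\le(w-v)/w\le r/w$.

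Feeding both estimates into the reduced inequality bounds its left-hand side by $q\cdot(r/w)+r\cdot(q/w)=2qr/w$, and the hypothesis $c\ge2$ is exactly what ensures $2qr/w\le c\,qr/w$, closing the argument. This also makes the role of the constant transparent: $c\ge2$ is precisely the threshold at which the quadratic cross term $c\,qr/w$ coming from the expanded product absorbs the two first-order Lipschitz defects $1-\sqrt{v/w}$ and $1-\sqrt{u/w}$. I would finish by recording, in the spirit of the remark following Theorem~\ref{GOmetric_Gen}, the configurations forcing equality, namely $d$-collinearity of $x,z,y$ together with saturated Lipschitz bounds for $F$.
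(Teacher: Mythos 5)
Your proof is correct, and after the common opening it takes a genuinely different route from the paper's. Both arguments exponentiate the triangle inequality, invoke $d(x,y)\le d(x,z)+d(z,y)$, and arrive at the same reduced inequality $q\sqrt{v/w}+r\sqrt{u/w}+c\,qr/w\ge q+r$ (the paper's (\ref{Suf2}), with its $a,b$ equal to your $q,r$; the paper first sets $c=2$ by monotonicity, while you carry $c$ along and use $c\ge 2$ only at the end --- same thing). From there the paper normalizes, assuming w.l.o.g.\ $q+u\le r+v$, replaces $w$ by its Lipschitz upper bound $q+u$ (its (\ref{Est3})), and splits into the cases $r\ge q+u$ and $r<q+u$, the second requiring the parametrization $\lambda=r/(q+u)$ and the second Lipschitz bound $v\ge q+u-r$. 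You instead prove the two symmetric per-term estimates $1-\sqrt{v/w}\le r/w$ and $1-\sqrt{u/w}\le q/w$, via $\sqrt{w}-\sqrt{v}=(w-v)/(\sqrt{w}+\sqrt{v})\le(w-v)/\sqrt{w}$ combined with $|F(z)-F(y)|\le r$ (resp.\ $|F(z)-F(x)|\le q$), and sum them to get $2qr/w\le c\,qr/w$. Your argument is case-free, preserves the $x\leftrightarrow y$ symmetry (no w.l.o.g.\ needed), and makes transparent exactly where $c\ge2$ enters --- a point the paper's two-case computation leaves implicit; conversely, the paper's route stays closer to the computations in the literature it extends (\cite{DHV2016}, \cite{Zhou}) but buys nothing extra for this statement.

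One small correction to your closing aside: since $F>0$, your bound $1-\sqrt{v/w}\le(w-v)/w$ is \emph{strict} whenever $0<v<w$ (as $\sqrt{w}+\sqrt{v}>\sqrt{w}$), so for $c=2$ equality in the triangle inequality for $h_c$ occurs only degenerately, with $z\in\{x,y\}$. There are thus no nontrivial ``saturated'' configurations to record; the constant $c=2$ is sharp only in a limiting sense, consistent with the failure of $h_c$ to be a metric for $0<c<2$ on the unit disk noted in the remark after the theorem.
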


\begin{proof}
Obviously, for all $x,y\in X\setminus M$, $h_{c}\left( x,y\right) \geq 0$, $%
h_{c}\left( x,y\right) =0$ is equivalent to $x=y$, and $h_{c}\left(
x,y\right) =h_{c}\left( y,x\right) $.

We will prove that for every $x,y,z\in G$ the triangle inequality $%
h_{c}\left( x,y\right) \leq h_{c}\left( x,z\right) +h_{c}\left( z,y\right) $
holds, which is equivalent to 
\begin{equation}
c\frac{d\left( x,z\right) d\left( z,y\right) }{F\left( z\right) \sqrt{%
F(x)F\left( y\right) }}+\frac{d(x,z)}{\sqrt{F(x)F\left( z\right) }}+\frac{%
d(z,y)}{\sqrt{F(y)F\left( z\right) }}\geq \frac{d(x,y)}{\sqrt{F(x)F\left(
y\right) }}.  \label{Form1}
\end{equation}

Since the left-hand side in (\ref{Form1}) is non-decreasing with respect to $%
c$, it suffices to consider the case $c=2$. Due to triangle inequality $%
d\left( x,y\right) \leq d(x,z)+d\left( z,y\right) $, it suffices to prove
that 
\begin{equation}
2\frac{d\left( x,z\right) d\left( z,y\right) }{F\left( z\right) \sqrt{%
F(x)F\left( y\right) }}+\frac{d(x,z)}{\sqrt{F(x)F\left( z\right) }}+\frac{%
d(z,y)}{\sqrt{F(y)F\left( z\right) }}\geq \frac{d(x,z)+d\left( z,y\right) }{%
\sqrt{F(x)F\left( y\right) }}.  \label{Suf1}
\end{equation}%
Denote $d\left( x,z\right) =a$, $d\left( z,y\right) =b$, $F\left( x\right)
=u $, $F\left( y\right) =v$ and $F\left( z\right) =w$. Using these
notations, inequality (\ref{Suf1}) is equivalent to 
\begin{equation}
2\frac{ab}{w}+a\sqrt{\frac{v}{w}}+b\sqrt{\frac{u}{w}}\geq a+b\text{.}
\label{Suf2}
\end{equation}%
Using the $1-$Lipschiz continuity of the function $F$, it follows that $%
w\leq \min \left\{ a+u,b+v\right\} $. We may assume that $a+u\leq b+v$,
otherwise we swap $x$ and $y$. Then 
\begin{equation}
w\leq a+u\leq b+v\text{, }  \label{Ineqe}
\end{equation}%
hence we have 
\begin{equation}
2\frac{ab}{w}+a\sqrt{\frac{v}{w}}+b\sqrt{\frac{u}{w}}\geq 2\frac{ab}{a+u}+a%
\sqrt{\frac{v}{a+u}}+b\sqrt{\frac{u}{a+u}}\text{.}  \label{Est3}
\end{equation}%
We check if 
\begin{equation}
2\frac{ab}{a+u}+a\sqrt{\frac{v}{a+u}}+b\sqrt{\frac{u}{a+u}}\geq a+b.
\label{Est4}
\end{equation}%
\textbf{Case 1. }Assume that $b\geq a+u$. Then 
\begin{eqnarray*}
2\frac{ab}{a+u}+a\sqrt{\frac{v}{a+u}}+b\sqrt{\frac{u}{a+u}} &\geq &2\frac{ab%
}{a+u}+b\sqrt{\frac{u}{a+u}} \\
&\geq &a+\frac{ab}{a+u}+b\sqrt{\frac{u}{a+u}}
\end{eqnarray*}%
But 
\begin{equation*}
a+\frac{ab}{a+u}+b\sqrt{\frac{u}{a+u}}-\left( a+b\right) =b\sqrt{\frac{u}{a+u%
}}\left( 1-\sqrt{\frac{u}{a+u}}\right) \geq 0.
\end{equation*}%
The latter inequalities show that (\ref{Est4}) holds in this case. \newline
\textbf{Case 2. }Assume that\textbf{\ }$b<a+u$. Denote $\lambda =\frac{b}{a+u%
}$. Note that $\lambda \in \lbrack 0,1)$. \newline
By (\ref{Ineqe}), $v\geq a+u-b$, which implies the following lower bound for
the difference between the left-hand side and the right-hand side in (\ref%
{Est4}): 
\begin{eqnarray*}
&&2\frac{ab}{a+u}+a\sqrt{\frac{v}{a+u}}+b\sqrt{\frac{u}{a+u}}-\left(
a+b\right) \\
&\geq &a\lambda +a\sqrt{1-\lambda }+\lambda \sqrt{u(a+u)}-a-\lambda u \\
&=&\lambda \sqrt{u}\left( \sqrt{a+u}-\sqrt{u}\right) +a\sqrt{1-\lambda }%
\left( 1-\sqrt{1-\lambda }\right) \geq 0,
\end{eqnarray*}%
hence (\ref{Est4}) holds.

By (\ref{Est4}) and (\ref{Est3}) we get (\ref{Suf2}), which is equivalent to
(\ref{Suf1}). Now using triangle inequality $d\left( x,y\right) \leq
d(x,z)+d\left( z,y\right) $ we see that (\ref{Suf1}) implies (\ref{Form1})
and the claim is proved.
\end{proof}

\begin{corollary}
Let $\left( X,d\right) $ be a metric space, $G\subset X$ be an open set with
non-empty boundary and $c\geq 2$ be a constant. Then the
Dovgoshey-Hariri-Vuorinen's function defined by $h_{G,c}\left( x,y\right)
=\log \left( 1+c\frac{d(x,y)}{\sqrt{\delta (x)\delta \left( y\right) }}%
\right) $ for every $x,y\in G$, is a metric. Here $\delta \left( x\right) =%
\mathrm{dist}\left( x,\partial G\right) $, $x\in X$.
\end{corollary}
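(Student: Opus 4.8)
The plan is to obtain this corollary as a direct specialization of Theorem \ref{DHVgeneralized}, exactly parallel to the way the Gehring--Osgood corollary was deduced from Theorem \ref{GOmetric_Gen}. Concretely, I would set $M=\partial G$ and take $F=\delta$, where $\delta(x)=\mathrm{dist}(x,\partial G)$, and then simply verify that these choices meet all the hypotheses of the theorem, after which its conclusion transfers verbatim to $h_{G,c}$.

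First I would check that $M=\partial G$ is a nonempty proper closed subset of $X$. Closedness of the boundary holds for any subset of a topological space. Nonemptiness is assumed. For properness, I would use that $G$ is open with nonempty boundary: since $\partial\emptyset=\emptyset$, the hypothesis $\partial G\neq\emptyset$ forces $G\neq\emptyset$, and because $G$ is open it is disjoint from $\partial G$, so any point of $G$ witnesses $\partial G\neq X$.

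Next I would verify that $F=\delta$ is a $1$-Lipschitz map from $(X\setminus\partial G,d)$ into $(0,\infty)$. The $1$-Lipschitz property of a distance-to-a-set function is recorded in the Introduction. Strict positivity on $X\setminus\partial G$ follows from the standard fact that $\delta(x)=0$ precisely when $x\in\overline{\partial G}$, together with the closedness of $\partial G$, which gives $\overline{\partial G}=\partial G$; hence $\delta(x)>0$ for every $x\in X\setminus\partial G$.

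With the hypotheses confirmed, Theorem \ref{DHVgeneralized} yields that $\log\bigl(1+c\,\tfrac{d(x,y)}{\sqrt{\delta(x)\delta(y)}}\bigr)$ is a metric on $X\setminus\partial G$ for every $c\geq 2$. Since $G$ is open we have $G\subseteq X\setminus\partial G$, so the restriction of this metric to $G\times G$ is again a metric, and this restriction is exactly $h_{G,c}$. I expect no genuine obstacle: the argument is a routine verification of hypotheses, and the only points deserving a word of care are the properness of $\partial G$ and the strict positivity of $\delta$ off the boundary, both of which are immediate once one invokes the openness of $G$ and the closedness of $\partial G$.
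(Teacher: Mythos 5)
Your proposal is correct and is exactly the argument the paper intends: the corollary is a direct specialization of Theorem \ref{DHVgeneralized} with $M=\partial G$ and $F=\delta$, mirroring the paper's own deduction of the Gehring--Osgood corollary from Theorem \ref{GOmetric_Gen}. Your extra care about the properness of $\partial G$, the positivity of $\delta$ off the (closed) boundary, and the restriction from $X\setminus\partial G$ to $G$ is sound and adds nothing beyond what the paper's pattern already requires.
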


\begin{remark}
Consider $X=\mathbb{R}^{2}$ with the Euclidean metric. If $G=\mathbb{B}^{2}$
is the unit disk, it was shown in \cite{DHV2016} that $h_{c}$ is not a
metric if $0<c<2$. Very recently, Fujimura and Vuorinen \cite{FujiVu} proved
in the case $G=\mathbb{H}^{2}$ (the upper half plane) that the function $%
h_{c}$ is a metric for every $c\geq 1$.
\end{remark}

\begin{theorem}
Let $\left( X,d\right) $ be a metric space and $M$ be a closed nonempty
proper subset of $X$. Let Let $F:\left( X\setminus M,d\right) \rightarrow
\left( 0,\infty \right) $ be a $1-$Lipschitz function. Let $c>0$. The
generalized Dovgoshey-Hariri-Vuorinen function $h_{c}\left( x,y\right) =\log
\left( 1+c\frac{d(x,y)}{\sqrt{F(x)F\left( y\right) }}\right) $, $x,y\in
X\setminus M$ satisfies the Gromov hyperbolicity condition with a Gromov
constant $\delta \leq \log \left( 2+\frac{1}{c}\right) $.
\end{theorem}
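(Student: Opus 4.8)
The plan is to verify the symmetric four-point form of Gromov hyperbolicity used in Step~1 of the proof of Theorem~\ref{G-O}, i.e. to produce $\delta$ with
\[
h_c(x,z)+h_c(y,w)\le \max\{h_c(x,w)+h_c(y,z),\,h_c(x,y)+h_c(z,w)\}+2\delta
\]
for all $x,y,z,w\in X\setminus M$. The first move is an algebraic simplification. Writing $g(u)=\tfrac12\log F(u)$, one has $h_c(u,v)=\log\big(\sqrt{F(u)F(v)}+c\,d(u,v)\big)-g(u)-g(v)$; since every vertex occurs exactly once in each of the three pairings $\{xy,zw\}$, $\{xz,yw\}$, $\{xw,yz\}$, the term $g(x)+g(y)+g(z)+g(w)$ is common to all three sides and cancels. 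Hence it is equivalent to prove the same inequality for $L(u,v)=\log Q_{uv}$ with $Q_{uv}:=\sqrt{F(u)F(v)}+c\,d(u,v)$, and, after exponentiating and putting $k=e^{2\delta}$, the goal becomes the purely multiplicative statement
\[
Q_{xz}Q_{yw}\le k\,\max\{Q_{xw}Q_{yz},\,Q_{xy}Q_{zw}\},\qquad k=\big(2+\tfrac1c\big)^2.
\]
Crucially this formulation never invokes a triangle inequality for $h_c$, which is essential because $h_c$ need not be a metric when $c<2$.

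Next I would normalise by symmetry. Relabelling the four points permutes the three pairing--products ($S_4$ acts onto $S_3$ on perfect matchings), so I may assume $Q_{xz}Q_{yw}$ is the largest of the three and need only bound its ratio to the second largest; as in Steps~2--3 of Theorem~\ref{G-O} this also lets me impose convenient orderings among the six distances. Expanding, $Q_{xz}Q_{yw}=\Phi+c\,d(x,z)\sqrt{F(y)F(w)}+c\,d(y,w)\sqrt{F(x)F(z)}+c^2 d(x,z)d(y,w)$, where the leading term $\Phi=\sqrt{F(x)F(y)F(z)F(w)}$ is common to all three products; the same holds for the other two. Because $k\ge1$ and $\Phi\ge0$, the inequality reduces to dominating the three ``excess'' terms of $Q_{xz}Q_{yw}$ by $(k-1)\Phi$ plus $k$ times the excess terms of a single suitably chosen denominator (the choice being the source of the disjunction in Step~1 of Theorem~\ref{G-O}).

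The three excess terms have different homogeneity in $c$, and the constant $(2+\tfrac1c)^2=4+\tfrac4c+\tfrac1{c^2}$ is tailored to them. The top term is handled by the elementary inequality $d(x,z)\,d(y,w)\le 4\max\{d(x,y)d(z,w),\,d(x,w)d(y,z)\}$, valid in every metric space: it follows from $d(x,z)\le\min\{d(x,y)+d(y,z),d(x,w)+d(w,z)\}$ and $d(y,w)\le\min\{d(x,y)+d(x,w),d(y,z)+d(z,w)\}$ and is sharp on a ``square'', which yields exactly the coefficient $4=\lim_{c\to\infty}(2+\tfrac1c)^2$. The main obstacle is the two linear cross--terms $c\,d(x,z)\sqrt{F(y)F(w)}$ and $c\,d(y,w)\sqrt{F(x)F(z)}$: the hypothesis supplies only the additive control $|F(u)-F(v)|\le d(u,v)$, whereas the metric couples the vertices through the geometric mean $\sqrt{F(u)F(v)}$, so a ratio such as $\sqrt{F(w)/F(z)}$ cannot be bounded by a naive multiplicative comparison (a point near $M$ makes it blow up). I would resolve this exactly as in Case~2 of Theorem~\ref{G-O}: split according to the comparison of the diagonal distances $d(x,z),d(y,w)$ with the side distances and, in the offending case, reduce the number of free values of $F$ by substituting the Lipschitz bound for the smaller of $F(z),F(w)$, just as that proof uses $W\ge X/(1+aX)$; the stray $\sqrt{F}$--ratio is then traded against a distance term, and Step~8's monotonicity lemma $t\mapsto(1+pt)/(1+qt)\le\max\{1,p/q\}$ closes each case. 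The additive $\tfrac1c$ in the constant appears precisely here, from the summand $1$ in $h_c=\log\big(1+c\,d/\sqrt{F(\cdot)F(\cdot)}\big)$ (equivalently the $\sqrt{F(u)F(v)}$ in $Q_{uv}$) whenever a side distance is of the order of $\sqrt{F}$.

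Collecting the cases yields $k\le(2+\tfrac1c)^2$, and taking logarithms gives the Gromov constant $\delta\le\log\big(2+\tfrac1c\big)=\log\frac{2c+1}{c}$, recovering the Euclidean value of \cite{Zhou} in this general setting. I expect the cross--term analysis to be the only genuinely delicate point: keeping the constant sharp forces one to exploit the additive Lipschitz control through the reciprocal substitution rather than through any direct comparison of the $\sqrt{F}$ factors.
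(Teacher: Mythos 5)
Your opening reduction is exactly the paper's: writing $\lambda(x,y)=c\,d(x,y)+\sqrt{F(x)F(y)}$ (your $Q_{xy}$), the four-point condition becomes $\lambda(x,z)\lambda(y,w)\le e^{2\delta}\max\{\lambda(x,w)\lambda(y,z),\lambda(x,y)\lambda(z,w)\}$, with no metric property of $h_{c}$ invoked. From there, however, you commit to expanding the products and dominating the three excess terms separately, and the decisive step of that plan --- the control of the cross terms $c\,d(x,z)\sqrt{F(y)F(w)}$ and $c\,d(y,w)\sqrt{F(x)F(z)}$ --- is never carried out: you only assert that the Case 2 substitution from the proof of Theorem \ref{G-O} ``would'' close it. That machinery is tailored to factors of the form $1+dX$ with a single reciprocal $X=1/F$ per factor; here the vertices are coupled through geometric means, and you yourself flag this as the one genuinely delicate point without resolving it. Your quadratic lemma $d(x,z)d(y,w)\le 4\max\{d(x,y)d(z,w),\,d(x,w)d(y,z)\}$ is correct (with $d(x,y)$ minimal among the four sides, $d(x,z)\le 2d(y,z)$ and $d(y,w)\le 2d(x,w)$) and sharp on the metric four-cycle, but it creates a synchronization problem your sketch does not address: the pairing realizing its maximum need not be the pairing your cross-term estimates require, while the disjunction must be settled by a single choice for the whole product; moreover the slack $(k-1)\Phi$ you budget against is regime-dependent and vanishes when the $F$-values are small compared to the distances. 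So the proposal has a genuine gap at its center, and it is unverified that the term-by-term scheme survives with the exact constant $\left(2+\frac{1}{c}\right)^{2}$.

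The idea the paper uses, and which your sketch is missing, is a one-line upgrade of the Lipschitz hypothesis to the geometric mean: if $F(x)\ge F(z)$ then $F(x)-\sqrt{F(x)F(z)}=\sqrt{F(x)}\left(\sqrt{F(x)}-\sqrt{F(z)}\right)\le F(x)-F(z)\le d(x,z)$, so in all cases $F(x)\le\sqrt{F(x)F(z)}+d(x,z)$. Averaging this with the analogous bound for $F(y)$ gives
\begin{equation*}
\sqrt{F(x)F(y)}\le \tfrac{1}{2}\left( d(x,z)+d(z,y)\right) +\tfrac{1}{2}\left( \sqrt{F(x)F(z)}+\sqrt{F(z)F(y)}\right),
\end{equation*}
and adding $c\,d(x,y)\le c\left( d(x,z)+d(z,y)\right)$ yields the relaxed triangle inequality $\lambda(x,y)\le \frac{2c+1}{2c}\left( \lambda(x,z)+\lambda(z,y)\right) \le \frac{2c+1}{c}\max\{\lambda(x,z),\lambda(z,y)\}$. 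With that in hand no expansion is needed: normalize by your symmetry observation so that $\lambda(x,w)\le\min\{\lambda(x,y),\lambda(y,z),\lambda(z,w)\}$, route each diagonal through the minimal side to get $\lambda(x,z)\le\frac{2c+1}{c}\lambda(z,w)$ and $\lambda(y,w)\le\frac{2c+1}{c}\lambda(x,y)$, and multiply --- exactly $e^{2\delta}=\left(2+\frac{1}{c}\right)^{2}$, as in the paper (following \cite{Zhou}). Replacing your term-by-term case analysis by this relaxed triangle inequality would turn your reduction into a complete proof.
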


\begin{proof}
We show that the proof of Theorem 1.1 in \cite{Zhou} remains valid after
relaxing the assumptions of this theorem, by replacing: the Euclidean space $%
\mathbb{R}^{n}$ by an arbitrary metric space $\left( X,d\right) $, $\partial
G$ by an arbitrary nonempty closed proper subset $M$ of $X$, the domain of
definition $G$ by $X\setminus M$, and $\delta \left( \cdot \right) =\mathrm{%
dist}\left( \cdot ,\partial G\right) $ by an arbitrary $1-$Lipschitz
function $F$. Note that we do not need to assume neither that $c\geq 2$ nor
that $h_{c}$ is a metric.

Denoting $\lambda \left( x,y\right) =cd(x,y)+\sqrt{F(x)F(y)}$, the
inequality $h_{c}(x,z)+h_{c}(y,w)\leq \max \left\{
h_{c}(x,w)+h_{c}(y,z),h_{c}(x,y)+h_{c}(z,w)\right\} +2\delta $ is equivalent
to 
\begin{equation}
\lambda \left( x,z\right) \lambda \left( y,w\right) \leq e^{2\delta }\max
\left\{ \lambda \left( x,w\right) \lambda \left( y,z\right) \text{, }\lambda
\left( x,y\right) \lambda \left( z,w\right) \right\} .  \label{Gromov_DHV}
\end{equation}%
For arbitrary $x,y,z\in X$, as $F$ is $1-$Lipschitz, we have $F(x)\leq \sqrt{%
F(x)F(z)}+d\left( x,z\right) $ and $F(y)\leq \sqrt{F(z)F(y)}+d\left(
z,y\right) $, hence $\sqrt{F(x)F(y)}\leq \frac{F(x)+F(y)}{2}\leq \frac{1}{2}%
(d\left( x,z\right) +d\left( z,y\right) )+\frac{1}{2}\left( \sqrt{F(x)F(z)}+%
\sqrt{F(z)F(y)}\right) $. \newline
By the triangle inequality, $cd\left( x,y\right) \leq c\left( d\left(
x,z\right) +d\left( z,y\right) \right) $. Adding the latter two inequalities
we get 
\begin{eqnarray*}
\lambda \left( x,y\right) &\leq &\frac{2c+1}{2c}c\left( d\left( x,z\right)
+d\left( z,y\right) \right) +\frac{1}{2}\left( \sqrt{F(x)F(z)}+\sqrt{F(z)F(y)%
}\right) \\
&\leq &\frac{2c+1}{2c}\left( \lambda \left( x,z\right) +\lambda \left(
z,y\right) \right) \leq \frac{2c+1}{c}\max \left\{ \lambda \left( x,z\right)
,\lambda \left( z,y\right) \right\} .
\end{eqnarray*}%
Fix $x,y,z,w$ $\in X$. As in the proof of Theorem \ref{GOmetric_Gen}, we may
assume that $\lambda \left( x,w\right) \leq \min \left\{ \lambda \left(
x,y\right) ,\lambda \left( y,z\right) ,\lambda \left( z,w\right) \right\} $
by swapping $x$ and $z$ and/ or $y$ and $w$. Then 
\begin{eqnarray*}
\lambda \left( x,z\right) &\leq &\frac{2c+1}{c}\max \left\{ \lambda \left(
x,w\right) ,\lambda \left( w,z\right) \right\} =\frac{2c+1}{c}\lambda \left(
z,w\right) \text{ and } \\
\lambda \left( y,w\right) &\leq &\frac{2c+1}{c}\max \left\{ \lambda \left(
y,x\right) ,\lambda \left( x,w\right) \right\} =\frac{2c+1}{c}\lambda \left(
x,y\right) .
\end{eqnarray*}%
Multiplying the latter two inequalities we get (\ref{Gromov_DHV}) with $%
e^{2\delta }=\left( \frac{2c+1}{c}\right) ^{2}$.
\end{proof}

\begin{theorem}
Let $\left( X,d\right) $ be a metric space and $h_{c}$ be the generalized
Dovgoshey-Hariri-Vuorinen metric from Theorem \ref{DHVgeneralized}, where $%
c\geq 2$. Then the identity map $1_{X\setminus M}:\left( X\setminus
M,d\right) \rightarrow \left( X\setminus M,h_{c}\right) $ is $1-$%
quasiconformal.
\end{theorem}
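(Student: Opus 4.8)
The plan is to follow the template already used for the quasiconformality of the generalized Gehring--Osgood metric, with the product of two factors replaced by the single factor built from the geometric mean $\sqrt{F(x)F(y)}$. The first ingredient is a two-sided estimate of $h_c$ in terms of $d$ coming from the $1$-Lipschitz property of $F$. Fix $x\in X\setminus M$. Since $|F(y)-F(x)|\le d(x,y)$, for every $y$ with $d(x,y)<F(x)$ we have $F(x)-d(x,y)\le F(y)\le F(x)+d(x,y)$, whence
\[
\log\left(1+c\frac{d(x,y)}{\sqrt{F(x)(F(x)+d(x,y))}}\right)\le h_c(x,y)\le \log\left(1+c\frac{d(x,y)}{\sqrt{F(x)(F(x)-d(x,y))}}\right).
\]
The right-hand estimate is an increasing function of $d(x,y)$ on $[0,F(x))$ and the left-hand estimate is an increasing function of $d(x,y)$ on $[0,\infty)$; both monotonicities follow from an elementary differentiation, exactly as in Step 8 of the Gehring--Osgood argument, and they are what will let me pin the relevant extrema at $d(x,y)=r$ below.

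Next I would check that $1_{X\setminus M}$ is a homeomorphism onto the metric space $(X\setminus M,h_c)$, which is indeed a metric space by Theorem \ref{DHVgeneralized} since $c\ge 2$. Continuity follows because the upper bound tends to $\log 1=0$ as $d(x,y)\to 0$, so $d(x_n,x)\to 0$ forces $h_c(x_n,x)\to 0$. For openness, observe that the lower bound is a strictly increasing function of $d(x,y)$ that vanishes at $d(x,y)=0$ and tends to $+\infty$; hence $h_c(x_n,x)\to 0$ squeezes this lower bound to $0$ and therefore forces $d(x_n,x)\to 0$ (alternatively, one may invert the lower bound explicitly, as was done for the Gehring--Osgood metric). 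This proves that the identity is a homeomorphism.

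Finally, for $f=1_{X\setminus M}$ and $0<r<F(x)$, the monotonicity of the two bounds places the supremum in the numerator and the infimum in the denominator of $H_f(x,r)$ at $d(x,y)=r$, giving
\[
H_f(x,r)\le \frac{\log\left(1+c\frac{r}{\sqrt{F(x)(F(x)-r)}}\right)}{\log\left(1+c\frac{r}{\sqrt{F(x)(F(x)+r)}}\right)}.
\]
As $r\to 0$ both arguments of the logarithms tend to $1$, so using $\log(1+u)\sim u$ (equivalently, L'Hospital's rule) the numerator and denominator are both asymptotic to $cr/F(x)$, and the quotient tends to $1$. Thus $\limsup_{r\to 0}H_f(x,r)\le 1$ for every $x$, which yields that $1_{X\setminus M}$ is $1$-quasiconformal. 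I expect the only genuine care to be needed in the homeomorphism step, specifically in justifying openness, and in verifying the monotonicity of the bounding functions so that the extrema defining $H_f(x,r)$ are controlled at $d(x,y)=r$; once these are in place, the limit is routine and mirrors the Gehring--Osgood case.
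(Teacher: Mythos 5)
Your proposal is correct and follows essentially the same route as the paper: the same two-sided estimate $\log\bigl(1+c\frac{d(x,y)}{\sqrt{F(x)(F(x)+d(x,y))}}\bigr)\le h_c(x,y)\le \log\bigl(1+c\frac{d(x,y)}{\sqrt{F(x)(F(x)-d(x,y))}}\bigr)$ from the $1$-Lipschitz property, the same use of the monotonicity of $t/\sqrt{a-t}$ and $t/\sqrt{a+t}$ to pin the sup and inf in $H_f(x,r)$ at $d(x,y)=r$, and the same limit computation giving $1$. Even your openness step matches, since the paper carries out exactly the explicit inversion of the lower bound that you mention as the alternative.
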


\begin{proof}
Let $x\in X$. For every $y\in X$ such that $d(x,y)<F(x)$ we have 
\begin{equation}
h_{c}\left( x,y\right) \leq \log \left( 1+c\frac{d(x,y)}{\sqrt{F(x)\left(
F\left( x\right) -d(x,y\right) )}}\right) ,  \label{Ha}
\end{equation}%
hence the identity \ map $1_{X\setminus M}:\left( X\setminus M,d\right)
\rightarrow \left( X\setminus M,h_{c}\right) $ is continuous. For every $%
y\in X$, 
\begin{equation}
h_{c}\left( x,y\right) \geq \log \left( 1+c\frac{d(x,y)}{\sqrt{F(x)\left(
F\left( x\right) +d(x,y\right) )}}\right) ,  \label{Hb1}
\end{equation}%
which is equivalent to $\frac{d(x,y)}{\sqrt{F\left( x\right) +d(x,y)}}\leq 
\frac{\sqrt{F(x)}}{c}\left( e^{h_{c}(x,y)}-1\right) $. The inequality $\frac{%
t}{\sqrt{a+t}}\leq b$, where $a,b,t>0$ holds if and only if $0<t\leq \frac{b%
}{2}\left( b+\sqrt{b^{2}+4a}\right) $. Then for every $x\neq y$ in $X$ ,
denoting $H_{c}(x,y)=\left( e^{h_{c}(x,y)}-1\right) $ we have 
\begin{equation}
d\left( x,y\right) \leq \frac{F(x)}{2c^{2}}H_{c}(x,y)\left( H_{c}(x,y)+\sqrt{%
\left( H_{c}(x,y)\right) ^{2}+4c^{2}}\right) .  \label{Hb}
\end{equation}%
If $\underset{n\rightarrow \infty }{\lim }h_{c}\left( x_{n},x\right) =0$, (%
\ref{Hb}) shows that $\underset{n\rightarrow \infty }{\lim }d\left(
x_{n},x\right) =0$, therefore the identity \ map $1_{X\setminus M}:\left(
X\setminus M,d\right) \rightarrow \left( X\setminus M,h_{c}\right) $ is open
and we conclude that it is a homeomorphism. \newline
For $0<r<F(x)$, (\ref{Ha}) implies $\sup \left\{ h_{c}(x,y):d\left(
x,y\right) \leq r\right\} \leq \log \left( 1+c\frac{r}{\sqrt{F(x)(F\left(
x\right) -r)}}\right) $, since the function $\frac{t}{\sqrt{a-t}}$ is
increasing for $t\in (0,a)$, where $a>0$. \newline
For every $r>0$, (\ref{Hb1}) implies $\inf \left\{ h_{c}(x,y):d\left(
x,y\right) \geq r\right\} \geq \log \left( 1+c\frac{r}{\sqrt{F(x)(F\left(
x\right) +r)}}\right) $, since the function $\frac{t}{\sqrt{a+t}}$ is
increasing for $t\in (0,\infty )$, where $a>0$. \newline
\newline
Then $\underset{r\rightarrow 0}{\lim \sup }H_{f}\left( x,r\right) \leq 
\underset{r\rightarrow 0}{\lim \sup }\frac{\log \left( 1+c\frac{r}{\sqrt{%
F(x)(F\left( x\right) -r)}}\right) }{\log \left( 1+c\frac{r}{\sqrt{%
F(x)(F\left( x\right) +r)}}\right) }$. \newline
By L'Hospital' rule, $\underset{r\rightarrow 0}{\lim }\frac{\log \left( 1+c%
\frac{r}{\sqrt{F(x)(F\left( x\right) -r)}}\right) }{\log \left( 1+c\frac{r}{%
\sqrt{F(x)(F\left( x\right) +r)}}\right) }=\underset{r\rightarrow 0}{\lim }%
\frac{\frac{d}{dr}\left( \frac{r}{\sqrt{F\left( x\right) -r}}\right) }{\frac{%
d}{dr}\left( \frac{r}{\sqrt{F\left( x\right) +r}}\right) }=1$. It follows
that the homeomorphism $1_{X\setminus M}:\left( X\setminus M,d\right)
\rightarrow \left( X\setminus M,h_{c}\right) $ is $1-$quasiconformal.
\end{proof}

\begin{remark}
In the proof of the above theorem we did not use the assumption $c\geq 2$,
but only $c>0$.
\end{remark}

\section{A generalization of Nikolov-Andreev metric}

Nikolov-Andreev metric has been introduced in \cite{Nikolov}. The triangle
inequality for a generalization of Nikolov-Andreev metric is proved in \cite[%
Proposition 11]{Nikolov}. However, we give below an explicit proof, for the
sake of completness and in order to extend the domain where the metric is
defined.

\begin{theorem}
\label{genNA}Let $\left( X,d\right) $ be a metric space and $M$ be a closed
nonempty proper subset of $X$. Let $F:\left( X\setminus M,d\right)
\rightarrow \left( 0,\infty \right) $ be a $1-$Lipschitz function. Then the
function defined by $i\left( x,y\right) =2\log \frac{F(x)+F(y)+d(x,y)}{2%
\sqrt{F(x)F\left( y\right) }}$ for every $x,y\in X\setminus M$ is a metric.
\end{theorem}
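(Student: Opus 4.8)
The plan is to verify the metric axioms in the usual order, treating the triangle inequality as the only substantive step. Symmetry of $i$ is immediate from the symmetric appearance of $x$ and $y$ in the definition. For non-negativity, the AM--GM inequality gives $F(x)+F(y)\geq 2\sqrt{F(x)F(y)}$, so that $F(x)+F(y)+d(x,y)\geq 2\sqrt{F(x)F(y)}$ and the argument of the logarithm is at least $1$; hence $i(x,y)\geq 0$. Equality forces both $F(x)=F(y)$ (from the AM--GM step) and $d(x,y)=0$, the latter giving $x=y$; conversely $i(x,x)=0$. This settles $i(x,y)=0\iff x=y$, and the symmetry and non-negativity are routine.

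For the triangle inequality $i(x,y)\leq i(x,z)+i(z,y)$, I would first pass to an equivalent algebraic form. Writing $a=d(x,z)$, $b=d(z,y)$, $c=d(x,y)$ and $u=F(x)$, $v=F(y)$, $w=F(z)$, dividing by $2$ and exponentiating turns the inequality into
\[
\frac{u+v+c}{2\sqrt{uv}}\leq \frac{(u+w+a)(w+v+b)}{4w\sqrt{uv}},
\]
which, after clearing the common factor $2\sqrt{uv}$ and multiplying by $2w$, is exactly
\[
2w(u+v+c)\leq (u+w+a)(w+v+b).
\]

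The reduction to a factorization is the heart of the argument. First, the triangle inequality $c\leq a+b$ for $d$ lets me replace $c$ by $a+b$ on the left, so it suffices to prove $2w(u+v+a+b)\leq (u+w+a)(w+v+b)$. Setting $p=u+a$ and $q=v+b$, the left-hand side is $2w(p+q)$ and the right-hand side is $(p+w)(q+w)$, whose difference is
\[
(p+w)(q+w)-2w(p+q)=pq-pw-qw+w^{2}=(p-w)(q-w).
\]
The $1$-Lipschitz property of $F$ supplies the two bounds I need: $w=F(z)\leq F(x)+d(x,z)=u+a=p$ and $w=F(z)\leq F(y)+d(z,y)=v+b=q$. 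Hence $p-w\geq 0$ and $q-w\geq 0$, so $(p-w)(q-w)\geq 0$ and the desired inequality follows.

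The only place requiring genuine insight is spotting that, after invoking $c\leq a+b$, the inequality collapses to the nonnegativity of the product $(p-w)(q-w)$ under precisely the two Lipschitz estimates available; once the substitution $p=u+a$, $q=v+b$ is made, the remaining computation is mechanical. I expect no further obstacle, and in particular, unlike the Gehring--Osgood case, no case analysis on the relative sizes of $a$, $b$, $c$ is needed.
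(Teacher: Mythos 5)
Your proof is correct and is essentially identical to the paper's: both reduce the triangle inequality via $d(x,y)\leq a+b$ to the factorized inequality $\left[ w-(a+u)\right]\left[ w-(b+v)\right]\geq 0$ (your $(p-w)(q-w)\geq 0$), which holds because the $1$-Lipschitz property gives $w\leq\min\{a+u,\,b+v\}$. Your substitution $p=u+a$, $q=v+b$ is merely a notational convenience over the paper's computation.
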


\begin{proof}
For every $x,y\in X\setminus M$ we have $i\left( x,y\right) \geq 0$, since $%
F(x),F(y)>0$ and $F(x)+F(y)\geq 2\sqrt{F(x)F\left( y\right) }$, $i\left(
x,y\right) =0$ if and only if $x=y$, as well as $i\left( x,y\right) =i\left(
y,x\right) $. \newline
Denote $d\left( x,z\right) =a$, $d\left( z,y\right) =b$, $F\left( x\right)
=u $, $F\left( y\right) =v$ and $F\left( z\right) =w$. The triangle
inequality for the function $i$ is equivalent to 
\begin{equation*}
\frac{a+u+w}{2\sqrt{uw}}\frac{b+v+w}{2\sqrt{vw}}\geq \frac{d(x,y)+c+d}{2%
\sqrt{cd}}.
\end{equation*}%
By the triangle inequality for the distance $d(\cdot ,\cdot )$, it suffices
to prove that 
\begin{equation*}
\frac{a+u+w}{2\sqrt{uw}}\frac{b+v+w}{2\sqrt{vw}}\geq \frac{a+b+u+v}{2\sqrt{uv%
}}\text{,}
\end{equation*}%
that is equivalent to $\left[ w+\left( a+u\right) \right] \cdot \left[
w+\left( b+v\right) \right] \geq 2w\left( a+u+b+v\right) $. i.e. to 
\begin{equation*}
\left[ w-\left( a+u\right) \right] \cdot \left[ w-\left( b+v\right) \right]
\geq 0\text{.}
\end{equation*}%
The latter inequality holds since the $1-$Lipschiz continuity of the
function $F$ implies $w\leq \min \left\{ a+u,b+v\right\} $.
\end{proof}

In \cite[Theorem 1.1]{Luo} it was proved that the Nikolov-Andreev metric $%
i_{\Omega }$ in a proper subdomain $\Omega \subset \mathbb{R}^{n}$, defined
by $i_{\Omega }\left( x,y\right) =2\log \frac{d_{\Omega }(x)+d_{\Omega
}(y)+d(x,y)}{2\sqrt{d_{\Omega }(x)d_{\Omega }(y)}}$ for $x,y\in \Omega $, is
Gromov hyperbolic with a Gromov constant $\delta =\log 15$.

\begin{theorem}
Let $\left( X,d\right) $ be a metric space and $M$ be a closed nonempty
proper subset of $X$. Let $F:\left( X\setminus M,d\right) \rightarrow
(0,\infty )$ be a $1-$Lipschitz function. Then the generalized
Nikolov-Andreev metric defined by $i\left( x,y\right) =2\log \frac{%
F(x)+F(y)+d(x,y)}{2\sqrt{F(x)F\left( y\right) }}$ for $x,y\in X\setminus M$
is Gromov hyperbolic with a Gromov constant $\delta \leq \log 9$.
\end{theorem}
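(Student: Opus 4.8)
The plan is to reduce the four-point Gromov inequality to a purely metric inequality for the auxiliary quantity $\nu(x,y)=F(x)+F(y)+d(x,y)$, noting that $i(x,y)=2\log\frac{\nu(x,y)}{2\sqrt{F(x)F(y)}}$. Writing the condition
\[
i(x,z)+i(y,w)\le \max\{i(x,w)+i(y,z),\,i(x,y)+i(z,w)\}+2\delta
\]
and collecting logarithms, I would observe that each of the three pairings of $\{x,y,z,w\}$ into two pairs contributes the \emph{same} denominator factor $4\sqrt{F(x)F(y)F(z)F(w)}$, since every point occurs exactly once in each pairing. That common factor cancels, and the Gromov condition becomes equivalent to
\[
\nu(x,z)\,\nu(y,w)\le e^{\delta}\,\max\{\nu(x,w)\,\nu(y,z),\,\nu(x,y)\,\nu(z,w)\}.
\]
This is the same cancellation mechanism used in the Dovgoshey--Hariri--Vuorinen proof, but here the surviving quantity $\nu$ is far better behaved.

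Next I would record a relaxed triangle inequality for $\nu$. Using only the triangle inequality for $d$ and $F\ge 0$,
\[
\nu(x,y)=F(x)+F(y)+d(x,y)\le \nu(x,z)+\nu(z,y),
\]
because the difference equals $2F(z)+d(x,z)+d(z,y)-d(x,y)\ge 0$. Thus $\nu$ is subadditive, so $\nu(x,y)\le 2\max\{\nu(x,z),\nu(z,y)\}$. I would then use the symmetry of the reduced inequality: swapping $x\leftrightarrow z$ (or $y\leftrightarrow w$) fixes the diagonal product $\nu(x,z)\nu(y,w)$ and only interchanges or permutes the two side-products $\nu(x,w)\nu(y,z)$ and $\nu(x,y)\nu(z,w)$. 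These swaps therefore let me place the smallest of the four ``sides'' $\nu(x,y),\nu(y,z),\nu(z,w),\nu(w,x)$ in the slot $\nu(x,w)$, so I may assume without loss of generality that $\nu(x,w)\le\min\{\nu(x,y),\nu(y,z),\nu(z,w)\}$, exactly as in the proof of Theorem~\ref{G-O}.

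With this normalization the conclusion is immediate. Routing each diagonal through an endpoint of the minimal side,
\[
\nu(x,z)\le \nu(x,w)+\nu(w,z)\le 2\nu(z,w),\qquad \nu(y,w)\le \nu(y,x)+\nu(x,w)\le 2\nu(x,y),
\]
where the last step of each uses $\nu(x,w)\le\nu(z,w)$ and $\nu(x,w)\le\nu(x,y)$. Multiplying gives $\nu(x,z)\nu(y,w)\le 4\,\nu(x,y)\nu(z,w)\le 4\max\{\nu(x,w)\nu(y,z),\nu(x,y)\nu(z,w)\}$, so the reduced inequality holds with $e^{\delta}=4$, i.e. $\delta\le\log 4$, which in particular yields the asserted bound $\delta\le\log 9$. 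As in the remark after the Dovgoshey--Hariri--Vuorinen theorem, this argument uses only $F>0$ and the $d$-triangle inequality, not the full $1$-Lipschitz hypothesis.

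I expect no real computational obstacle: the entire difficulty is the conceptual step in the first paragraph, recognizing that the factor $\sqrt{F(x)F(y)F(z)F(w)}$ cancels so that everything collapses to the subadditive quantity $\nu$. Once that is done, the relaxed triangle inequality has the clean constant $2$ (contrast the Dovgoshey--Hariri--Vuorinen case, where $\lambda=cd+\sqrt{F F}$ is not subadditive and the constant is $\tfrac{2c+1}{c}$), and the symmetric normalization closes the argument. The one point deserving care is justifying the ``without loss of generality'' reduction, i.e. verifying that the two swaps really do fix the left-hand side and permute the right-hand side of the reduced inequality.
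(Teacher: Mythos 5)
Your proof is correct, and while it follows the paper's overall skeleton, it substitutes a genuinely different---and sharper---key lemma. The paper makes exactly the same reduction you describe: it sets $\nu(x,y)=F(x)+F(y)+d(x,y)$, cancels the common factor $4\sqrt{F(x)F(y)F(z)F(w)}$ (this is why the factor $2$ in $i=2\log(\cdot)$ turns $2\delta$ into $e^{\delta}$ rather than $e^{2\delta}$, as you correctly have it), reduces the four-point condition to $\nu(x,z)\nu(y,w)\le e^{\delta}\max\left\{ \nu(x,w)\nu(y,z),\nu(x,y)\nu(z,w)\right\}$, normalizes by the same two swaps so that $\nu(x,w)$ is minimal among the four sides, and routes each diagonal through the minimal side. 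The difference lies in the relaxed triangle inequality for $\nu$: the paper uses the $1$-Lipschitz hypothesis (via $F(x)+F(y)\le 2F(z)+d(x,z)+d(z,y)$) to obtain $\nu(x,y)<\frac{3}{2}\left( \nu(x,z)+\nu(z,y)\right)$, hence a factor $3$ per routing step and $e^{\delta}=9$; you instead observe that $\nu$ is exactly subadditive, since $\nu(x,z)+\nu(z,y)-\nu(x,y)=2F(z)+d(x,z)+d(z,y)-d(x,y)\ge 0$ using only $F\ge 0$ and the triangle inequality for $d$, which gives the factor $2$ per step and $e^{\delta}=4$. Your argument therefore proves the strictly stronger conclusion $\delta\le\log 4$ (which implies the stated $\delta\le\log 9$) and dispenses with the Lipschitz hypothesis for the hyperbolicity inequality altogether---paralleling the paper's own Section 5, where the Ibragimov-type quantity $\mu(x,y)=d(x,y)+\max\left\{ F(x),F(y)\right\}$ yields $\delta\le\log 4$ for arbitrary positive $F$. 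The one point you flag as needing care, the ``without loss of generality'' reduction, is correctly resolved: the swaps $x\leftrightarrow z$ and $y\leftrightarrow w$ fix the diagonal product $\nu(x,z)\nu(y,w)$ and merely interchange or internally permute the two side products, exactly as in Step 3 of the proof of Theorem \ref{G-O}, so the minimal side can always be moved into the slot $\nu(x,w)$.
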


\begin{proof}
Let $x,y,z,w\in X\setminus M$. Denoting $\nu \left( x,y\right)
=F(x)+F(y)+d(x,y)$, the inequality $i(x,z)+i(y,w)\leq \max \left\{
i(x,w)+i(y,z),i(x,y)+i(z,w)\right\} +2\delta $ is equivalent to 
\begin{equation}
\nu \left( x,z\right) \nu \left( y,w\right) \leq e^{\delta }\max \left\{ \nu
\left( x,w\right) \nu \left( y,z\right) \text{, }\nu \left( x,y\right) \nu
\left( z,w\right) \right\} .  \label{Gromov_Nikolov}
\end{equation}%
As $F$ is $1-$Lipschitz, $F(x)\leq F(z)+d\left( x,z\right) $ and $F(y)\leq
F(z)+d\left( z,y\right) $, hence $F(x)+F(y)\leq 2F(z)+d(x,z)+d(z,y)$. This
inequality implies 
\begin{equation*}
v\left( x,z\right) +v\left( z,y\right) =F(x)+F(y)+2F(z)+d(x,z)+d(z,y)\geq
2\left( F(x)+F(y)\right) ,
\end{equation*}%
hence $F(x)+F(y)\leq \frac{1}{2}\left( v\left( x,z\right) +v\left(
z,y\right) \right) $. Using the triangle inequality for $d$ we see that $%
d\left( x,y\right) <\nu \left( x,z\right) +\nu \left( z,y\right) $. Adding
the latter two inequalities we obtain for every $x,y,z$ $\in X$ the
inequalities 
\begin{equation*}
\nu \left( x,y\right) <\frac{3}{2}\left( \nu \left( x,z\right) +\nu \left(
z,y\right) \right) \leq 3\max \left\{ \nu \left( x,z\right) ,\nu \left(
z,y\right) \right\} .
\end{equation*}%
Fix $x,y,z,w$ $\in X$. As in the proof of Theorem \ref{GOmetric_Gen}, we may
assume that $\nu \left( x,w\right) \leq \min \left\{ \nu \left( x,y\right)
,\nu \left( y,z\right) ,\nu \left( z,w\right) \right\} $ by swapping $x$ and 
$z$ and/ or $y$ and $w$. Then $\nu \left( x,z\right) \leq 3\max \left\{ \nu
\left( x,w\right) ,\nu \left( w,z\right) \right\} =3\nu \left( z,w\right) $
and $\nu \left( y,w\right) \leq 3\max \left\{ \nu \left( y,x\right) ,\nu
\left( x,w\right) \right\} =3\nu \left( x,y\right) $. Multiplying the latter
two inequalities we get (\ref{Gromov_Nikolov}) with $e^{\delta }=9$.
\end{proof}

\begin{theorem}
\label{NAQC}Let $\left( X,d\right) $ be a metric space and $i$ be the
generalized Nikolov-Andreev metric from Theorem \ref{genNA}. Then the
identity map $1_{X\setminus M}:\left( X\setminus M,d\right) \rightarrow
\left( X\setminus M,i\right) $ is $3-$quasiconformal.
\end{theorem}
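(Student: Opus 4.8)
The plan is to follow the same three-part scheme already used in this paper for the Gehring–Osgood and Dovgoshey–Hariri–Vuorinen metrics: first sandwich $i(x,y)$ between two explicit functions of $d(x,y)$ that are increasing in $d(x,y)$ (with $x$ fixed), deduce from the two one-sided estimates that the identity map is a homeomorphism, and then read the quasiconformal constant off the ratio of these two bounds via L'Hospital's rule. Fix $x\in X\setminus M$ throughout, and let $f=1_{X\setminus M}$.

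First I would record a lower bound valid for \emph{every} $y$. Writing the defining quotient as $\frac{F(x)+F(y)+d(x,y)}{2\sqrt{F(x)F(y)}}$ and applying AM--GM to the two numbers $F(y)$ and $F(x)+d(x,y)$ gives $F(x)+F(y)+d(x,y)\geq 2\sqrt{F(y)\,(F(x)+d(x,y))}$, which rearranges to $i(x,y)\geq \log\frac{F(x)+d(x,y)}{F(x)}=\log\left(1+\frac{d(x,y)}{F(x)}\right)$; notably no Lipschitz hypothesis is needed here. Equivalently $d(x,y)\leq F(x)\left(e^{i(x,y)}-1\right)$, so $i(x_n,x)\to 0$ forces $d(x_n,x)\to 0$, which gives openness of the identity map. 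For the companion upper bound I would invoke the $1$-Lipschitz property in the form $F(x)-d(x,y)\leq F(y)\leq F(x)+d(x,y)$: bounding $F(y)$ above in the numerator and below in the denominator yields, for $d(x,y)<F(x)$, the estimate $i(x,y)\leq \log\frac{(F(x)+d(x,y))^2}{F(x)(F(x)-d(x,y))}$, whose right-hand side tends to $0$ as $d(x,y)\to 0$; this gives continuity, and together with openness, the homeomorphism property.

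With the two bounds in hand the quasiconformal estimate is routine. Since both bounding functions are monotone in $d(x,y)$, for $0<r<F(x)$ the upper bound controls the numerator of $H_f(x,r)$, giving $\sup\{i(x,y):d(x,y)\leq r\}\leq \log\frac{(F(x)+r)^2}{F(x)(F(x)-r)}$, and for every $r>0$ the lower bound controls the denominator, giving $\inf\{i(x,y):d(x,y)\geq r\}\geq \log\left(1+\frac{r}{F(x)}\right)$. Hence $H_f(x,r)\leq \frac{\log\frac{(F(x)+r)^2}{F(x)(F(x)-r)}}{\log\left(1+r/F(x)\right)}$, and since numerator and denominator both vanish at $r=0$, L'Hospital's rule evaluates the limit as $r\to 0$ to $\frac{2/F(x)+1/F(x)}{1/F(x)}=3$. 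Thus $\limsup_{r\to 0}H_f(x,r)\leq 3$ for every $x\in X\setminus M$, i.e. the map is $3$-quasiconformal.

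I expect the only real subtlety to lie in the upper bound, where the numerator $F(x)+F(y)+d(x,y)$ and the denominator $\sqrt{F(x)F(y)}$ pull the extremal choice of $F(y)$ in opposite directions. Decoupling them through the two one-sided Lipschitz inequalities is exactly what produces the constant $3$: the two occurrences of $F(y)$ in the numerator contribute the term $2/F(x)$ and the denominator the term $1/F(x)$. (A jointly optimal choice of $F(y)$ would in fact sharpen the supremum bound to $\log\frac{F(x)}{F(x)-r}$ and yield the constant $1$, as for the previous two metrics, but the cruder decoupled bound already suffices for the claimed estimate.)
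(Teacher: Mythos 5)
Your proof is correct and follows the same three-part scheme as the paper: your upper bound for $d(x,y)<F(x)$, namely $i(x,y)\leq\log\frac{\left( F(x)+d(x,y)\right) ^{2}}{F(x)\left( F(x)-d(x,y)\right) }$, is exactly the paper's estimate (there written as $2\log \frac{F(x)+d(x,y)}{\sqrt{F(x)\left( F(x)-d(x,y)\right) }}$), and the endgame via L'Hospital with limit $3$ is the same. The one genuine difference is your lower bound. The paper uses the mean inequality $F(x)+F(y)\geq 2\sqrt{F(x)F(y)}$ together with the $1$-Lipschitz property to get $i(x,y)\geq 2\log \left( 1+\frac{d(x,y)}{2\sqrt{F(x)(F(x)+d(x,y))}}\right) $, and must then invert this through the auxiliary observation that $\frac{t}{\sqrt{a+t}}\leq b$ holds iff $0<t\leq \frac{b}{2}\left( b+\sqrt{b^{2}+4a}\right) $ in order to deduce openness. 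Your AM--GM pairing of $F(y)$ with $F(x)+d(x,y)$ instead yields $i(x,y)\geq \log \left( 1+\frac{d(x,y)}{F(x)}\right) $ with no Lipschitz hypothesis and with the trivial inversion $d(x,y)\leq F(x)\left( e^{i(x,y)}-1\right) $; since both lower bounds have the same first-order expansion $d(x,y)/F(x)$ at $0$, the L'Hospital limit is $3$ either way, so your route is a mild but real simplification of the paper's. Your closing parenthetical is also correct: optimizing jointly over the admissible range $F(x)-d\leq F(y)\leq F(x)+d$ shows the sharp envelopes are $\sup \left\{ i(x,y):d(x,y)\leq r\right\} \leq \log \frac{F(x)}{F(x)-r}$ (extremal at $d(x,y)=r$, $F(y)=F(x)-r$) and $\inf \left\{ i(x,y):d(x,y)\geq r\right\} \geq \log \left( 1+\frac{r}{F(x)}\right) $, whose ratio tends to $1$; thus the identity map is in fact $1$-quasiconformal, and the constant $3$ claimed by the theorem (and proved by both you and the paper via the decoupled bounds) is not optimal.
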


\begin{proof}
Let $x\in X$. For every $y\in X$ such that $d(x,y)<F(x)$ we have 
\begin{equation}
i\left( x,y\right) \leq 2\log \frac{F(x)+d(x,y)}{\sqrt{F(x)\left( F\left(
x\right) -d\left( x,y\right) \right) }},  \label{ia}
\end{equation}%
hence the identity map $1_{X\setminus M}:\left( X\setminus M,d\right)
\rightarrow \left( X\setminus M,i\right) $ is continuous. For every $y\in X$%
, using the mean inequality and the $1-$Lipschitz property of $f$ we get 
\begin{equation}
i\left( x,y\right) \geq 2\log \left( 1+\frac{d(x,y)}{2\sqrt{F(x)(F(x)+d(x,y))%
}}\right) .  \label{ib}
\end{equation}%
The inequality $\frac{t}{\sqrt{a+t}}\leq b$, where $a,b>0$ and $t>0$ holds
if and only if $0<t\leq \frac{b}{2}\left( b+\sqrt{b^{2}+4a}\right) $. Then
for every $x\neq y$ in $X$ we have 
\begin{equation*}
d\left( x,y\right) \leq 2F(x)\left( e^{\frac{i(x,y)}{2}}-1\right) \left[
\left( e^{\frac{i(x,y)}{2}}-1\right) +\sqrt{\left( e^{\frac{i(x,y)}{2}%
}-1\right) ^{2}+1}\right] .
\end{equation*}%
If $\underset{n\rightarrow \infty }{\lim }i\left( x_{n},x\right) =0$, the
above inequality shows that $\underset{n\rightarrow \infty }{\lim }d\left(
x_{n},x\right) =0$, therefore $1_{X\setminus M}:\left( X\setminus M,d\right)
\rightarrow \left( X\setminus M,i\right) $ is open, hence it is a
homeomorphism \newline
For $0<r<F(x)$, (\ref{ia}) implies $\sup \left\{ i(x,y):d\left( x,y\right)
\leq r\right\} \leq 2\log \left( \frac{F(x)+r}{\sqrt{F(x)(F\left( x\right)
-r)}}\right) $, as the function $\frac{a+t}{\sqrt{a-t}}$ is increasing for $%
t\in (0,a)$, where $a>0$. For every $r>0$, $\inf \left\{ i(x,y):d\left(
x,y\right) \geq r\right\} \geq 2\log \left( 1+\frac{r}{2\sqrt{F(x)(F(x)+r)}}%
\right) $, as the function $\frac{t}{\sqrt{a+t}}$ is increasing for $t\in
(0,\infty )$, where $a>0$. \newline
Then $\underset{r\rightarrow 0}{\lim \sup }H_{f}\left( x,r\right) \leq 
\underset{r\rightarrow 0}{\lim \sup }\frac{\log \left( \frac{F(x)+r}{\sqrt{%
F(x)(F\left( x\right) -r)}}\right) }{\log \left( 1+\frac{r}{2\sqrt{%
F(x)(F(x)+r)}}\right) }$. \newline
Applying L'Hospital's rule we get 
\begin{equation*}
\underset{r\rightarrow 0}{\lim }\frac{\log \left( \frac{F(x)+r}{\sqrt{%
F(x)(F\left( x\right) -r)}}\right) }{\log \left( 1+\frac{r}{2\sqrt{%
F(x)(F(x)+r)}}\right) }=\underset{r\rightarrow 0}{\lim }\frac{\frac{1}{F(x)+r%
}+\frac{1}{2}\frac{1}{F(x)-r}}{\frac{1}{r+2\sqrt{F(x)(F(x)+r)}}\left( 1+%
\sqrt{\frac{F(x)}{F(x)+r}}\right) -\frac{1}{2}\frac{1}{F(x)+r}}=3.
\end{equation*}%
It follows that the homeomorphism $1_{X}:\left( X,d\right) \rightarrow
\left( X\setminus M,i\right) $ is $3-$quasiconformal.
\end{proof}

\section{A generalization of Ibragimov metric}

Let $\left( X,d\right) $ be a metric space and $M$ be a nonempty closed
proper subset of $X$. Denote $d_{M}(x)=$\textrm{dist}$\left( x,M\right) $
for every $x\in X$.

Ibragimov defined the following function $u_{X}\left( x,y\right) =2\log 
\frac{d(x,y)+\max \left\{ d_{M}(x),d_{M}(y)\right\} }{\sqrt{d_{M}(x)d_{M}(y)}%
}$ for $x,y\in X\setminus M$ and proved in \cite[Theorem 2.1]{Ibra} that $%
\left( X\setminus M,u_{X}\right) $ is a \ Gromov hyperbolic metric space
with a Gromov constant $\delta \leq \log 4$. Moreover, the identity map
between the metric spaces $\left( X\setminus M,d\right) $ and $\left(
X\setminus M,u_{X}\right) $ is $5-$quasiconformal and if the space $\left(
X,d\right) $ is complete, then $\left( X\setminus M,u_{X.}\right) $ is so.
We will slightly extend \cite[Theorem 2.1]{Ibra}.

\begin{theorem}
Let $\left( X,d\right) $ be a metric space and $M$ be a nonempty closed
proper subset of $X$. Let $F:X\setminus M\rightarrow (0,\infty )$ be a
function. Define $v\left( x,y\right) =2\log \frac{d(x,y)+\max \left\{
F(x),F(y)\right\} }{\sqrt{F(x)F(y)}}$ for $x,y\in X\setminus M$. Then $%
\left( X\setminus M,v\right) $ is a Gromov hyperbolic metric space with a
Gromov constant $\delta \leq \log 4$ and the identity map $1_{X\setminus
M}:\left( X\setminus M,d\right) \rightarrow \left( X\setminus M,v\right) $
is open. If $F$ is $1-$Lipschitz on $\left( X\setminus M,d\right) $, then
the identity map $1_{X\setminus M}:\left( X\setminus M,d\right) \rightarrow
\left( X\setminus M,v\right) $ is $\frac{5}{2}-$quasiconformal. If $F$ has a
continuous extension $\widetilde{F}$ to $\left( X,d\right) $ with $%
\widetilde{F}\left( x\right) =0$ for every $x\in M$ and $\left( X,d\right) $
is complete, then $\left( X\setminus M,v\right) $ is also a complete metric
space.
\end{theorem}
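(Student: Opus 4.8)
The plan is to funnel every computation through the auxiliary quantity $\mu(x,y)=d(x,y)+\max\{F(x),F(y)\}$, so that $v(x,y)=2\log\frac{\mu(x,y)}{\sqrt{F(x)F(y)}}$, exploiting that in each inequality the factors $\sqrt{F(\cdot)}$ cancel, just as they did for $\nu$ in the Nikolov-Andreev proof. First I would check that $v$ is a metric. Symmetry is clear, and $v(x,y)\geq 0$ with equality iff $x=y$ follows from $\max\{F(x),F(y)\}\geq\sqrt{F(x)F(y)}$. The triangle inequality $v(x,y)\leq v(x,z)+v(z,y)$ is, after cancelling $\sqrt{F(x)F(y)}$, equivalent to $F(z)\,\mu(x,y)\leq\mu(x,z)\,\mu(z,y)$. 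Writing $a=d(x,z)$, $b=d(z,y)$, $\alpha=F(x)$, $\beta=F(y)$, $\gamma=F(z)$ and using $d(x,y)\leq a+b$, this reduces to $\gamma\,(a+b+\max\{\alpha,\beta\})\leq(a+\max\{\alpha,\gamma\})(b+\max\{\gamma,\beta\})$; discarding the nonnegative terms $a(\max\{\gamma,\beta\}-\gamma)$ and $b(\max\{\alpha,\gamma\}-\gamma)$ leaves the elementary inequality $\max\{\alpha,\gamma\}\max\{\gamma,\beta\}\geq\gamma\max\{\alpha,\beta\}$, settled by comparing $\gamma$ with $\min\{\alpha,\beta\}$ and $\max\{\alpha,\beta\}$. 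I stress that this argument uses only the positivity of $F$, not any Lipschitz property.

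For the Gromov hyperbolicity I would observe that the common factor $\sqrt{F(x)F(y)F(z)F(w)}$ cancels in the symmetric four-point inequality, which therefore becomes $\mu(x,z)\,\mu(y,w)\leq e^{\delta}\max\{\mu(x,w)\mu(y,z),\,\mu(x,y)\mu(z,w)\}$. The decisive lemma is that $\mu$ is subadditive: $\mu(x,y)\leq\mu(x,z)+\mu(z,y)$, because $d(x,y)\leq d(x,z)+d(z,y)$ and $\max\{F(x),F(y)\}\leq\max\{F(x),F(z)\}+\max\{F(z),F(y)\}$; in particular $\mu(x,y)\leq 2\max\{\mu(x,z),\mu(z,y)\}$. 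Then, exactly as in the Nikolov-Andreev and Dovgoshey-Hariri-Vuorinen proofs, by swapping $x\leftrightarrow z$ and/or $y\leftrightarrow w$ I may assume $\mu(x,w)\leq\min\{\mu(x,y),\mu(y,z),\mu(z,w)\}$; subadditivity then gives $\mu(x,z)\leq\mu(x,w)+\mu(w,z)\leq 2\mu(z,w)$ and $\mu(y,w)\leq\mu(y,x)+\mu(x,w)\leq 2\mu(x,y)$, and multiplying these yields the inequality with $e^{\delta}=4$, i.e. $\delta\leq\log 4$. Once more only positivity of $F$ is needed, in agreement with the abstract.

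The openness of $1_{X\setminus M}$ I would deduce from the pointwise bound $d(x,y)\leq F(x)\,e^{v(x,y)/2}\left(e^{v(x,y)/2}-1\right)$: since $\sqrt{F(x)F(y)}\,e^{v(x,y)/2}=\mu(x,y)\geq F(y)$ forces $F(y)\leq F(x)e^{v(x,y)}$, hence $\sqrt{F(x)F(y)}\leq F(x)e^{v(x,y)/2}$, the definition of $v$ rearranges into this estimate, so $v(x_n,x)\to 0$ implies $d(x_n,x)\to 0$. For the quasiconformality claim I would bring in the Lipschitz hypothesis, first to obtain continuity (then $F(x_n)\to F(x)$ and both the numerator and the denominator in $v(x_n,x)$ tend to $F(x)$), so that the map is a homeomorphism. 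For fixed $x$ and $0<r<F(x)$ the Lipschitz bound $F(x)-r\leq F(y)\leq F(x)+r$ gives $\sup\{v(x,y):d(x,y)\leq r\}\leq 2\log\frac{F(x)+2r}{\sqrt{F(x)(F(x)-r)}}$ and $\inf\{v(x,y):d(x,y)\geq r\}\geq 2\log\left(1+\frac{r}{\sqrt{F(x)(F(x)+r)}}\right)$; applying L'Hospital's rule to the ratio of these logarithms (the numerator argument having derivative $\frac{5}{2F(x)}$ and the denominator argument $\frac{1}{F(x)}$ at $r=0$) yields $\limsup_{r\to 0}H_f(x,r)\leq\frac52$.

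I expect the completeness statement to be the main obstacle, since one must upgrade a $v$-Cauchy sequence to a convergent one and, above all, keep its limit out of the deleted set $M$. Given a $v$-Cauchy sequence $(x_n)$, the identity $e^{v(x_m,x_n)/2}=\frac{d(x_m,x_n)}{\sqrt{F(x_m)F(x_n)}}+\max\{\sqrt{t},1/\sqrt{t}\}$ with $t=F(x_n)/F(x_m)$ gives $\max\{\sqrt t,1/\sqrt t\}\leq e^{v(x_m,x_n)/2}$, hence $|\log F(x_m)-\log F(x_n)|\leq v(x_m,x_n)$; therefore $F(x_n)\to L\in(0,\infty)$ and $(F(x_n))$ is bounded. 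The bound $d(x_m,x_n)\leq\max\{F(x_m),F(x_n)\}\left(e^{v(x_m,x_n)/2}-1\right)$ then shows $(x_n)$ is $d$-Cauchy, so $x_n\to x$ in the complete space $(X,d)$. This is where the extension hypotheses enter decisively: continuity of $\widetilde F$ gives $\widetilde F(x)=L>0$, while $\widetilde F\equiv 0$ on $M$ forces $x\in X\setminus M$ with $F(x)=L$, and continuity once more yields $v(x_n,x)=2\log\frac{d(x_n,x)+\max\{F(x_n),F(x)\}}{\sqrt{F(x_n)F(x)}}\to 2\log 1=0$, so $(x_n)$ converges in $(X\setminus M,v)$. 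The delicate point is precisely that the hypothesis $\widetilde F\equiv 0$ on $M$ is what prevents the $d$-limit from escaping into $M$, so it cannot be dropped.
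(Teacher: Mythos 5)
Your proposal is correct and follows essentially the same route as the paper's proof: the triangle inequality via the reduction to $F(z)\,\mu(x,y)\leq\mu(x,z)\,\mu(z,y)$ with the same elementary $\max$-inequalities, Gromov hyperbolicity via subadditivity of $\mu$ and the swap-WLOG reduction yielding $e^{\delta}=4$, openness from the lower bound $v(x,y)\geq$ a function of $d(x,y)/F(x)$, the $\tfrac{5}{2}$-quasiconformality from the same sup/inf estimates and L'Hospital computation, and completeness via the bound $\left\vert \log F(x_m)-\log F(x_n)\right\vert \leq v(x_m,x_n)$ forcing the $d$-limit to stay outside $M$. The only deviations are cosmetic (e.g., your openness bound $d(x,y)\leq F(x)e^{v/2}\left(e^{v/2}-1\right)$ in place of the paper's $d(x,y)\leq F(x)\left(e^{v}-1\right)$), so no substantive comparison is needed.
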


\begin{proof}
We follow the lines of the proof of \cite[Theorem 2.1]{Ibra}.

a) First we prove that $v$ is a metric on $X\setminus M$. For every $x,y\in
X\setminus M$ the symmetry $v(x,y)=v(y,x)$ is obvious, while $\max \left\{
F(x),F(y)\right\} \geq \sqrt{F(x)F(y)}>0$ and $d\left( x,y\right) \geq 0$
implies $v\left( x,y\right) \geq 0$, with $v\left( x,y\right) =0$ if and
only if $x=y$.\newline
The triangle inequality $v\left( x,y\right) \leq v\left( x,z\right) +v\left(
z,y\right) $ is equivalent to 
\begin{eqnarray}
&&F(z)(d(x,y)+\max \left\{ F(x),F(y)\right\}  \label{TIV} \\
&\leq &\left( d(x,z)+\max \left\{ F(x),F(z)\right\} \right) \cdot \left(
d\left( z,y\right) +\max \left\{ F(z),F(y)\right\} \right) .  \notag
\end{eqnarray}%
\newline
The triangle inequality $d\left( x,y\right) \leq d(x,z)+d(z,y)$ implies \ 
\begin{eqnarray}
&&F(z)(d(x,y)+\max \left\{ F(x),F(y)\right\} )  \label{TID} \\
&\leq &F(z)d(x,z)+F(z)d(z,y)+F(z)\max \left\{ F(x),F(y)\right\} .  \notag
\end{eqnarray}%
But $F(z)\leq \max \left\{ F(z),F(y)\right\} $ and $F(z)\leq \max \left\{
F(x),F(z)\right\} $, hence $F(z)d(x,z)+F(z)d(z,y)\leq d(x,z)\max \left\{
F(z),F(y)\right\} +d(z,y)\max \left\{ F(x),F(z)\right\} $. \newline
In addition, $F(z)\max \left\{ F(x),F(y)\right\} \leq \max \left\{
F(x),F(z)\right\} \cdot \max \left\{ F(z),F(y)\right\} $. \newline
Adding the latter two inequalities we get 
\begin{eqnarray}
&&F(z)d(x,z)+F(z)d(z,y)+F(z)\max \left\{ F(x),F(y)\right\}  \label{AD2} \\
&\leq &\left( d(x,z)+\max \left\{ F(x),F(z)\right\} \right) \cdot \left(
d\left( z,y\right) +\max \left\{ F(z),F(y)\right\} \right) -d(x,z)d\left(
z,y\right)  \notag
\end{eqnarray}%
By (\ref{TID}) and (\ref{AD2}) we get (\ref{TIV}). \newline
b) Let $x,y,z,w\in X\setminus M$. We will prove that 
\begin{equation}
v(x,z)+v(y,w)\leq \max \left\{ v(x,w)+v(y,z),v(x,y)+v(z,w)\right\} +2\log 4.
\label{GHV}
\end{equation}%
As in \cite{Ibra}, denote $\mu \left( x,y\right) =d(x,y)+\max \left\{
F(x),F(y)\right\} $ for $x,y\in X\setminus M$. We see that $\mu $ is a
metric on $X\setminus M$, as the positivity of $F$ implies $\max \left\{
F(x),F(y)\right\} \leq \max \left\{ F(x),F(z)\right\} +\max \left\{
F(z),F(y)\right\} $and $d$ is a metric on $X$. \newline
Inequality (\ref{GHV}) is equivalent to 
\begin{equation}
\mu \left( x,z\right) \mu \left( y,w\right) \leq 4\max \left\{ \mu \left(
x,w\right) \mu \left( y,z\right) ,\mu \left( x,y\right) \mu \left(
z,w\right) \right\} .  \label{GHV2}
\end{equation}%
As in the proof of Theorem \ref{GOmetric_Gen}, we may assume that $\mu
\left( x,w\right) \leq \min \left\{ \mu \left( x,y\right) ,\mu \left(
y,z\right) ,\mu \left( z,w\right) \right\} $ by swapping $x$ and $z$ and/ or 
$y$ and $w$. Then 
\begin{equation*}
\mu \left( x,z\right) \mu \left( y,w\right) \leq \left( \mu \left(
x,w\right) +\mu \left( z,w\right) \right) \cdot \left( \mu \left( x,w\right)
+\mu \left( x,y\right) \right) \leq 2\mu \left( z,w\right) \cdot 2\mu \left(
x,y\right) \text{,}
\end{equation*}%
hence inequality (\ref{GHV2}) holds.

c) The estimates (2.3) and (2.4) from \cite{Ibra} can be extended from $%
u_{X}\left( x,y\right) $ to $v\left( x,y\right) $ without assuming that $F$
is $1-$Lipschitz. For every $x,y\in X\setminus M$ we have 
\begin{equation}
v(x,y)\geq 2\log \frac{\max \left\{ F(x),F(y)\right\} }{\sqrt{F(x)F\left(
y\right) }}=\log \left\vert \frac{F(x)}{F(y)}\right\vert \text{ and }
\label{Ib1}
\end{equation}%
\begin{equation}
v(x,y)\geq \log \left( 1+\frac{d(x,y)}{F(x)}\right) \left( 1+\frac{d(x,y)}{%
F(y)}\right) \geq \log \left( 1+\frac{d(x,y)}{F(x)}\right) .  \label{Ib2}
\end{equation}%
The latter estimate implies $d\left( x,y\right) \leq F(x)\left(
e^{v(x,y)}-1\right) $, hence $\underset{n\rightarrow \infty }{\lim }v\left(
x_{n},x\right) =0$ implies $\underset{n\rightarrow \infty }{\lim }d\left(
x_{n},x\right) =0$, which shows that the identity map $1_{X\setminus
M}:\left( X\setminus M,d\right) \rightarrow \left( X\setminus M,v\right) $
is open.

d) Now assume that $F:X\setminus M\rightarrow (0,\infty )$ is $1-$Lipschitz.
Let $x,y\in X\setminus M$. Since $F(y)\leq F(x)+d\left( x,y\right) $, it
follows as in \cite[(2.6)]{Ibra} that for $d\left( x,y\right) <F(x)$ we have 
\begin{equation}
v(x,y)\leq \log \frac{\left( F(x)+2d\left( x,y\right) \right) ^{2}}{%
F(x)\left( F(x)-d\left( x,y\right) \right) }.\text{ }  \label{Iba}
\end{equation}%
By inequality (\ref{Iba}), $\underset{n\rightarrow \infty }{\lim }d\left(
x_{n},x\right) =0$ implies $\underset{n\rightarrow \infty }{\lim }v\left(
x_{n},x\right) =0$, hence $1_{X\setminus M}:\left( X\setminus M,d\right)
\rightarrow \left( X\setminus M,v\right) $ is continuous, and being open, is
a homeomorphism. \newline
Note that if $F$ is $1-$Lipschitz we may refine the estimates from below for 
$v\left( x,y\right) $: 
\begin{equation}
v(x,y)\geq 2\log \left( 1+\frac{d(x,y)}{\sqrt{F(x)F(y)}}\right) \geq 2\log
\left( 1+\frac{d(x,y)}{\sqrt{F(x)\left( F(x)+d\left( x,y\right) \right) }}%
\right) \text{.}  \label{IbB}
\end{equation}%
As in the proof of Theorem \ref{NAQC}, from the above inequality we obtain
for $x\neq y$ the following inequality: 
\begin{equation*}
d(x,y)\leq \frac{1}{2}F(x)\left( e^{\frac{v(x,y)}{2}}-1\right) \left[ \left(
e^{\frac{v(x,y)}{2}}-1\right) +\sqrt{\left( e^{\frac{v(x,y)}{2}}-1\right)
^{2}+4}\right] ,
\end{equation*}%
which shows that $\underset{n\rightarrow \infty }{\lim }v\left(
x_{n},x\right) =0$ implies $\underset{n\rightarrow \infty }{\lim }d\left(
x_{n},x\right) =0$.

Let $x\in X$. For $0<r<F(x)$, (\ref{Iba}) implies $\sup \left\{
v(x,y):d\left( x,y\right) \leq r\right\} \leq \log \frac{\left(
F(x)+2r\right) ^{2}}{F(x)\left( F(x)-r\right) }$, since the function $\frac{%
(a+t)^{2}}{a-t}$ is increasing for $t\in \left( 0,a\right) $, where $a>0$.
For every $r>0$, (\ref{Ib2}) implies $\inf \left\{ v(x,y):d\left( x,y\right)
\geq r\right\} \geq \log \left( 1+\frac{r}{F(x)}\right) $. Then $H_{f}\left(
x,r\right) \leq \frac{\log \frac{\left( F(x)+2r\right) ^{2}}{F(x)\left(
F(x)-r\right) }}{\log \left( 1+\frac{r}{F(x)}\right) }$ for $0<r<F(x)$,
therefore 
\begin{equation*}
\underset{r\rightarrow 0}{\lim \sup }H_{f}\left( x,r\right) \leq \underset{%
r\rightarrow 0}{\lim \sup }\frac{\log \frac{\left( F(x)+2r\right) ^{2}}{%
F(x)\left( F(x)-r\right) }}{\log \left( 1+\frac{r}{F(x)}\right) }=\underset{%
r\rightarrow 0}{\lim }\frac{\log \frac{\left( F(x)+2r\right) ^{2}}{%
F(x)\left( F(x)-r\right) }}{\log \left( 1+\frac{r}{F(x)}\right) }=\underset{%
r\rightarrow 0}{\lim }\frac{\frac{4}{F(x)+2r}+\frac{1}{F(x)-r}}{\frac{1}{%
F(x)+r}}=5\text{. }
\end{equation*}%
As in \cite[Theorem 2.1]{Ibra}, the identity map $1_{X}:\left( X\setminus
M,d\right) \rightarrow \left( X\setminus M,v\right) $ is $5$-quasiconformal.

Moreover, (\ref{IbB}) implies $\inf \left\{ v(x,y):d\left( x,y\right) \geq
r\right\} \geq 2\log \left( 1+\frac{r}{\sqrt{F(x)\left( F(x)+r\right) }}%
\right) $, since the function $\frac{t}{\sqrt{a+t}}$ is increasing for $t\in
\left( 0,\infty \right) $ whenever $a>0$. Then $H_{f}\left( x,r\right) \leq 
\frac{\log \frac{\left( F(x)+2r\right) ^{2}}{F(x)\left( F(x)-r\right) }}{%
2\log \left( 1+\frac{r}{\sqrt{F(x)\left( F(x)+r\right) }}\right) }$ for $%
0<r<F(x)$, therefore 
\begin{equation*}
\underset{r\rightarrow 0}{\lim \sup }H_{f}\left( x,r\right) \leq \underset{%
r\rightarrow 0}{\lim }\frac{\log \frac{\left( F(x)+2r\right) ^{2}}{%
F(x)\left( F(x)-r\right) }}{2\log \left( 1+\frac{r}{\sqrt{F(x)\left(
F(x)+r\right) }}\right) }=\underset{r\rightarrow 0}{\lim }\frac{\frac{4}{%
F(x)+2r}+\frac{1}{F(x)-r}}{\frac{2F(x)+r}{\left( F(x)+r\right) \left( r+%
\sqrt{F(x)\left( F(x)+r\right) }\right) }}=\frac{5}{2}\text{. }
\end{equation*}

e) Assuming that $F$ has a continuous extension $\widetilde{F}$ to $X$ with $%
\widetilde{F}\left( x\right) =0$ for every $x\in M$ and that $\left(
X,d\right) $ is complete, we prove that $\left( X\setminus M,v\right) $ is
complete. Let $\left( x_{n}\right) _{n\geq 1}$ be a Cauchy sequence in $%
\left( X\setminus M,v\right) $. By (\ref{Ib1}), $\left( \log F\left(
x_{n}\right) \right) _{n\geq 1}$ is a Cauchy sequence in $\mathbb{R}$. Then $%
\left( \log F\left( x_{n}\right) \right) _{n\geq 1}$ is bounded: $\alpha =%
\underset{n\geq 1}{\inf }$ $\log F(x_{n})\in \mathbb{R}$ and $\beta =%
\underset{n\geq 1}{\sup }$ $\log F(x_{n})\in \mathbb{R}$. Then $e^{\alpha }=%
\underset{n\geq 1}{\inf }$ $F(x_{n})\leq \underset{n\geq 1}{\sup }%
F(x_{n})=e^{\beta }$. As (\ref{Ib2}) implies $d\left( x,y\right) \leq
F(x)\left( e^{v(x,y)}-1\right) \leq e^{\beta }\left( e^{v(x,y)}-1\right) $
for all $x,y\in X\setminus M$, it follows that $\left( x_{n}\right) _{n\geq
1}$ is a Cauchy sequence in $\left( X,d\right) $. Let $x\in X$ be such that $%
\underset{n\rightarrow \infty }{\lim }d\left( x_{n},x\right) =0$. Since $%
\widetilde{F}$ is continuous on $X$, $\widetilde{F}\left( x\right) =\underset%
{n\rightarrow \infty }{\lim }\widetilde{F}\left( x_{n}\right) =\underset{%
n\rightarrow \infty }{\lim }F(x_{n})\geq e^{\alpha }>0$, hence $x\in
X\setminus M$ and $\widetilde{F}\left( x\right) =F(x)$. Now $\underset{%
n\rightarrow \infty }{\lim }v\left( x_{n},x\right) =\underset{n\rightarrow
\infty }{\lim }2\log \frac{d(x_{n},x)+\max \left\{ F(x_{n}),F\left( x\right)
)\right\} }{\sqrt{F(x_{n})F(x)}}=\underset{n\rightarrow \infty }{\lim }2\log 
\frac{\max \left\{ F(x),F\left( x\right) )\right\} }{\sqrt{\left(
F(x)\right) ^{2}}}=0$, hence $\left( x_{n}\right) _{n\geq 1}$ converges to $%
x $ in $\left( X\setminus M,v\right) $.
\end{proof}

\end{document}